\documentclass[runningheads,final]{llncs}

\usepackage[T1]{fontenc}
\usepackage{comment}
\usepackage{cite}
\usepackage{wrapfig}

\spnewtheorem{assumption}[theorem]{Assumption}{\bfseries}{\itshape}

\usepackage{latexsym}
\usepackage{amsmath}
\usepackage{amssymb}
\usepackage{amsfonts}
\usepackage{mathtools}       
\usepackage{stmaryrd}        
\usepackage{bm}              

\providecommand{\liminf}{\operatornamewithlimits{liminf}} 
\DeclareMathOperator{\Tr}{Tr}     
\DeclareMathOperator{\Var}{Var}   
\DeclareMathOperator{\Cov}{Cov}   
\DeclareMathOperator{\Cond}{Cond} 
\DeclareMathOperator{\diag}{diag} 
\providecommand{\E}{\mathbb{E}} 
\providecommand{\ind}[1]{\mathbb{I}\{#1\}} 

\providecommand{\x}{\times}

\renewcommand{\geq}{\geqslant} 
\renewcommand{\leq}{\leqslant} 

\DeclarePairedDelimiterX{\inner}[2]{\langle}{\rangle}{#1, #2}

\usepackage{xcolor}

\usepackage{hyperref}
\usepackage{cleveref}

\crefname{dfn}{Definition}{Definition}
\crefname{prop}{Proposition}{Proposition}
\crefname{thm}{Theorem}{Theorem}
\crefname{cor}{Corollary}{Corollary}
\crefname{asp}{Assumption}{Assumption}

\usepackage{minted}

\usepackage{algorithm}
\usepackage{algorithmic}

\usepackage{hyperref}
\usepackage{color}
\usepackage{tikz}
\usetikzlibrary{arrows,calc,positioning,decorations.pathreplacing,backgrounds,fit}
\usepackage{subcaption}

\definecolor{cSelectFill}{RGB}{226,236,248}
\definecolor{cSelectLine}{RGB}{53,110,180}
\definecolor{cExact}{RGB}{185,78,72}
\definecolor{cAccent}{RGB}{36,95,74}
\definecolor{cMoment}{RGB}{214,138,39}
\definecolor{cGrayText}{RGB}{90,96,104}

\tikzset{
  >=latex,
  axis/.style={line width=0.85pt, ->},
  guide/.style={line width=0.6pt, draw=cGrayText!70, dashed},
  boundary/.style={line width=1.25pt, draw=cSelectLine},
  exactboundary/.style={line width=1.05pt, draw=cExact, dashed},
  region/.style={fill=cSelectFill},
  callout/.style={draw=cGrayText!80, rounded corners=2pt, fill=white, inner sep=3pt, align=left},
  vlabel/.style={font=\footnotesize, text=cGrayText},
  important/.style={font=\footnotesize\bfseries},
  samplept/.style={circle, fill=cGrayText!50, inner sep=0.65pt},
  meanarrow/.style={->, line width=1.0pt, draw=cMoment!90!black},
  ew/.style={->, line width=1.0pt, draw=cAccent},
}

\begin{document}
\title{
Beyond IGO-Flow: Toward Convergence Analysis of IGO in Continuous Spaces
}
\titlerunning{Toward Convergence Analysis of IGO in Continuous Spaces}
%
\author{Ryosuke Kimura\inst{1}\orcidID{0009-0006-7940-9934}\and \\
Youhei Akimoto\inst{1,2}\orcidID{0000-0003-2760-8123}}
\authorrunning{R. Kimura and Y. Akimoto}
%
\institute{University of Tsukuba, Tsukuba, Japan \and
RIKEN Center for Advanced Intelligence Project, Tokyo, Japan \\
\email{ryosuke.kimura@bbo.cs.tsukuba.ac.jp, akimoto@cs.tsukuba.ac.jp}
}
\maketitle              
%


\begin{abstract}

Information-Geometric Optimization (IGO) provides a unified framework for black-box optimization by interpreting the adaptation of a search distribution as a natural gradient update. 
Despite its conceptual importance, the convergence theory of IGO remains limited: most existing results concern continuous-time idealizations such as the IGO flow, rather than discrete-time updates with non-infinitesimal learning rates.
In this paper, we study discrete-time IGO in continuous spaces, formulated as natural gradient updates in the expectation-parameter coordinates of an exponential family.
In particular, we analyze IGO over the multivariate Gaussian family on strongly convex quadratic objective functions.
Our analysis covers a setting that simultaneously incorporates full covariance adaptation, a fixed positive learning rate, and quantile-based weights.
In this setting, we prove that the covariance matrix converges to the zero matrix.
We further show that the mean vector converges to the global optimum, provided that the condition number of the appropriately scaled covariance matrix is bounded at sufficiently frequent iterations.
These results advance the convergence theory of IGO and help bridge the gap between the mathematical theory of IGO and practical covariance-adaptive search methods such as CMA-ES.

\keywords{
Information-Geometric Optimization  \and 
Natural Gradient \and 
Evolution Strategies \and 
Convergence Analysis.
}
\end{abstract}

\section{Introduction}

Many real-world optimization problems are black-box in the sense that only function evaluations are available and gradient information cannot be accessed~\cite{Larson2019dfomethod}.
Evolution strategies (ESs), especially CMA-ES~\cite{hansen2001CMAES}, have demonstrated strong empirical performance across a wide range of black-box optimization (BBO) problems in continuous domains~\cite{2018largescalecma,nomura2021warmstartcma,maki2019control,fujii2018topology}.
However, despite its practical success, the theoretical understanding of these methods remains limited.

A natural way to study ESs theoretically is through Information-Geometric Optimization (IGO), which provides a unified view of a broad class of evolutionary algorithms~\cite{ollivier2017igo}. 
IGO formulates optimization as the evolution of a parametric search distribution updated along the natural gradient. 
This perspective is particularly appealing for rank-based stochastic search methods, including algorithms closely related to CMA-ES. 
Despite its conceptual importance, the convergence theory of IGO is still far from complete.
Establishing theoretical guarantees for IGO as a unifying framework for evolutionary computation is of fundamental importance for improving its reliability, promoting its use in real-world optimization problems, and advancing research in evolutionary computation.

In this paper, we study discrete-time IGO in continuous domains, formulated in the expectation-parameter coordinates of an exponential family.
We focus on IGO over the multivariate Gaussian family on strongly convex quadratic objective functions, while retaining three key features of practical covariance-adaptive search methods: full covariance adaptation, a fixed positive learning rate, and quantile-based weights. 
Although this setting captures essential aspects of practical covariance-adaptive search methods, it also makes the analysis substantially more difficult than in continuous-time idealizations or isotropic models.

Our main contributions are as follows. 
First, we prove that the covariance matrix generated by discrete-time IGO converges to the zero matrix under full covariance adaptation. 
Second, we show that the mean vector converges to the global optimum, provided that the condition number of the appropriately scaled covariance matrix is bounded at sufficiently frequent iterations.
The present analysis does not yet give a fully unconditional convergence guarantee for the mean vector. 
Nevertheless, it isolates the remaining obstacle in terms of the conditioning of the scaled covariance matrix and thereby provides a concrete route toward a complete convergence theory. 
In this sense, our results help narrow the gap between the mathematical theory of IGO and practical covariance-adaptive search methods such as CMA-ES.

The rest of this paper is organized as follows.
In \Cref{sec:related_work}, we outline related research and clarify the differences between prior studies and the present study.
In \Cref{sec:IGO}, we explain IGO and present the formulation studied in this paper. 
In \Cref{sec:convergence_analysis}, we present the main convergence results. 
Specifically, we analyze the convergence of the covariance matrix and the mean vector in \Cref{subsec:C_convergence} and  \Cref{subsec:m_convergence}, respectively.
Finally, we conclude the paper in \Cref{sec:conclusion}.

\section{Related Work}\label{sec:related_work}

Information-Geometric Optimization (IGO) is a unified framework for black-box optimization based on natural gradient updates of a search distribution~\cite{ollivier2017igo}. 
Existing analyses of IGO can be broadly divided into two lines: (1) analyses of discrete-time updates with a finite learning rate, and (2) analyses of idealized continuous-time trajectories obtained in the infinitesimal-step-size limit, such as the IGO flow. 

For discrete-time IGO, existing analyses mainly establish monotone improvement guarantees, including recent divergence-based refinements~\cite{akimoto2013objective,guilmeau2025klcondition}.
There are also convergence analyses with finite learning rates for related algorithms~\cite{akimoto2012ng,akimoto2022ode}, including an IGO-like natural gradient method with a simple objective transformation~\cite{akimoto2012ng} and an ODE-based analysis proving geometric convergence for a simplified isotropic ES with step-size adaptation~\cite{akimoto2022ode}. 
However, these results do not cover discrete-time IGO with both quantile-based weights and full covariance adaptation, which is the setting considered here.

Most convergence analyses of IGO have been developed for the continuous-time IGO flow~\cite{tobias2012igoflow,akimoto2012igoflow,beyer2014convergence}. 
While these studies are mathematically important, they analyze idealized trajectories rather than finite-step updates.
In contrast, we study a less idealized and more practically relevant setting: discrete-time IGO in continuous spaces with multivariate Gaussian distributions, full covariance adaptation, a finite learning rate, and quantile-based weights. 

\section{Information-Geometric Optimization (IGO)}
\label{sec:IGO}

In this section, we present the IGO formulation studied in this paper. 
We first summarize the central idea of IGO and then introduce the specific formulation used in our analysis.
In particular, we consider IGO on an exponential family of probability distributions, parameterized by expectation parameters, together with a quantile-based truncation weight.


\subsection{Overview}

Let $f:\mathcal{X}\to\mathbb{R}$ be an objective function on the search space $\mathcal{X}$, and consider the minimization of $f$.
IGO is based on a parametric family of probability distributions $\{P_\theta\}_{\theta\in\Theta}$ on $\mathcal{X}$.
Rather than optimizing $f$ directly over $\mathcal{X}$, IGO iteratively updates the distribution parameter $\theta$ by natural gradient ascent on the surrogate objective
\begin{align}
    J_t(\theta) := \E_{x \sim P_\theta}\bigl[W_t^f(x)\bigr].
\end{align}
Here, $W_t^f(x)$ is a rank-based weight defined with respect to the current distribution $P_t = P_{\theta_t}$.
Typically, $W_t^f(x)$ is defined using a nonincreasing function $w:[0,1]\to\mathbb{R}$ as follows:
\begin{align}\label{eq:general_def_weight}
    W_t^f(x) := w\Big(\Pr_{x' \sim P_t}\bigl(f(x') \le f(x)\bigr)\Big).
\end{align}
Thus, $W_t^f(x)$ depends only on the rank (quantile level) of $f(x)$ under $P_t$.

The natural gradient with respect to the Fisher metric is defined by
\begin{align}
    \widetilde{\nabla}_{\theta} J_t(\theta) = F(\theta)^{-1}\nabla_{\theta} J_t(\theta),
\end{align}
where $F(\theta)$ denotes the Fisher information matrix~\cite{amari1998natural}.
The IGO update is then given by the natural gradient ascent
\begin{align}
    \theta_{t+1} = \theta_t + \tau \widetilde{\nabla}_{\theta} J_t(\theta_t),
\end{align}
with learning rate $\tau \in (0,1]$.
Throughout this paper, we analyze the infinite-population model, i.e., the deterministic limit in which the expectations in the update are evaluated exactly rather than approximated from finite samples.

\subsection{Quantile-based Truncation Weight}

Fix a quantile level $q \in (0,1)$, and let $\kappa_t^q$ denote the $q$-quantile of $f(x)$ under $P_t$, defined by
\begin{equation}
    \kappa_t^q =
    \sup\Bigl\{
        u \in \mathbb{R}:
        \Pr_{x\sim P_t}\!\bigl(f(x)\leq u\bigr)\ge q
        \; \text{and} \;
        \Pr_{x\sim P_t}\!\bigl(f(x)\geq u\bigr)\le 1-q
    \Bigr\}.
\end{equation}
In this paper, we use the quantile-based truncation weight
\begin{align}\label{eq:def_weight}
    W_t^f(x) := \frac{1}{q}\,\mathbb{I}\{f(x)\le \kappa_t^q\}.
\end{align}
For simplicity, we write $I_t^{\le}(x) := \mathbb{I}\{f(x) \leq \kappa_t^q\}$.
This weight corresponds to the case where $w(z) = q^{-1}\mathbb{I}\{z \leq q\}$ in \eqref{eq:general_def_weight}.

When $f(x)$ has no atom at the threshold $\kappa_t^q$ under $P_t$, we have
\begin{align}\label{eq:no_ties}
    \Pr_{x \sim P_t}\bigl(f(x)=\kappa_t^q\bigr) = 0,
    \qquad
    \Pr_{x \sim P_t}\bigl(f(x)\le \kappa_t^q\bigr)=q,
\end{align}
and therefore
\begin{align}\label{eq:normalized_weight}
    \E_{x \sim P_t}\bigl[W_t^f(x)\bigr] = 1.
\end{align}
In \Cref{subsec:formulation_IGO}, we formulate the IGO assuming normalized weights, as in \eqref{eq:normalized_weight}.
This assumption is satisfied in the analytical setting used in this study.

\subsection{IGO in Expectation Parameterization}
\label{subsec:formulation_IGO}

Assume that $\{P_\theta\}_{\theta\in\Theta}$ is an exponential family with respect to a reference measure $\nu$ on $\mathcal{X}$, with density of the form
\begin{align}
    p_\theta(x) = \exp\bigl(\beta^\top T(x)-\psi(\beta)\bigr)h(x),
\end{align}
with natural parameter $\beta$, sufficient statistic $T:\mathcal{X}\to\mathbb{R}^n$, and log-partition function
\begin{align}
    \psi(\beta) = \log \int \exp\bigl(\beta^\top T(x)\bigr) h(x)\nu(dx).
\end{align}
We use the expectation parameter
\begin{align}
    \eta_t = \E_{x \sim P_t}[T(x)] = \nabla_\beta\psi(\beta).
\end{align}

For exponential families, the natural gradient of $J_t$ with respect to the expectation parameter $\eta$ has the simple form~\cite{ollivier2017igo}
\begin{align}
    \widetilde{\nabla}_{\eta} J_t(\theta_t)
    = \nabla_{\beta} J_t(\theta_t)
    = \E_{x \sim P_t}\big[W_t^f(x)(T(x)-\eta_t)\big].
\end{align}
Hence, with learning rate $\tau\in(0,1]$, the parameter update reads
\begin{subequations}
\begin{align}
    \eta_{t+1}
    &= \eta_t + \tau \widetilde{\nabla}_{\eta} J_t(\theta_t) \\
    &= (1-\tau)\eta_t + \tau \E_{x \sim P_t}\big[W_t^f(x)T(x)\big].
    \label{eq:igo_eta_update}
\end{align}
\end{subequations}

In the Gaussian case $P_t = \mathcal{N}(m_t,C_t)$, define the weighted mean vector $m_t^\star$ and the weighted covariance matrix $C_t^\star$ by
\begin{align}\label{eq:weighted_param}
    m_t^\star := \E_{x \sim P_t}\big[W_t^f(x)x\big]
    ,\enspace
    C_t^\star := \E_{x \sim P_t}\big[W_t^f(x)(x-m_t^\star)(x-m_t^\star)^\top\big].
\end{align}
Then the parameter update~\eqref{eq:igo_eta_update} reduces to
\begin{align}\label{eq:igo_ml_update_gaussian}
    \begin{cases}
        m_{t+1} = (1-\tau)m_t + \tau m_t^\star, \\
        C_{t+1} = (1-\tau)C_t + \tau C_t^\star
        + \tau(1-\tau)(m_t^\star-m_t)(m_t^\star-m_t)^\top.
    \end{cases}
\end{align}
Hereafter, we assume that $P_t = \mathcal{N}(m_t, C_t)$ and focus on the update rule~\eqref{eq:igo_ml_update_gaussian}.

\section{Convergence Analysis of IGO in Continuous Spaces}
\label{sec:convergence_analysis}

We consider the convergence of the IGO update \eqref{eq:igo_ml_update_gaussian} on a $d$-dimensional strongly convex quadratic objective function $f:\mathbb{R}^d \to \mathbb{R}$ of the form
\begin{align}\label{eq:convex_quadratic_func}
    f(x) = x^\top A x \qquad (A \succ 0).
\end{align}
Since $P_t=\mathcal{N}(m_t,C_t)$ on $\mathbb{R}^d$ and $f$ is continuous, the random variable $f(x)$ under $x\sim P_t$ has a continuous distribution. 
In particular,
\begin{align}
    \Pr_{x\sim P_t}\bigl(f(x)=\kappa_t^q\bigr) = 0,
\end{align}
so \eqref{eq:no_ties} and \eqref{eq:normalized_weight} hold in this setting.

In \Cref{subsec:monotone}, we establish a bound on the one-step progress of the expected objective value, which in particular implies its monotone improvement.
This result is used in \Cref{subsec:C_convergence} to establish the convergence of the covariance matrix. 
In \Cref{subsec:m_convergence}, we combine the fact $\lim_{t \to \infty}C_t=O$ with Assumption~\ref{asup:cond_num_lemma_assumption} and prove by contradiction that the mean vector converges to the optimal solution, i.e., $\lim_{t \to \infty}m_t = 0$.

\subsection{Monotone Improvement}
\label{subsec:monotone}

Define $\alpha := (0, A)$.
The sufficient statistic for the Gaussian distribution is $T(x)=(x, xx^\top)$.
Hence, the objective can be written as
\begin{align}
    f(x) = \langle \alpha, T(x)\rangle = \langle A, xx^\top \rangle = x^\top A x.
\end{align}
Let $V(P_t) := \E_{x \sim P_t}[f(x)]$ denote the expected objective value under $P_t$.
Since $V(P_t) = \E_{x\sim P_t}[\langle \alpha, T(x) \rangle] = \langle \alpha,  \eta_t \rangle$,
premultiplying both sides of \eqref{eq:igo_eta_update} by $\alpha$ yields
\begin{align}
    \langle \alpha, \eta_{t+1} \rangle
    &= (1-\tau)\langle \alpha, \eta_t \rangle + \tau\big\langle \alpha, \E_{x \sim P_t}\big[W_{t}^f(x)T(x)\big] \big\rangle \\
    \Longrightarrow V(P_{t+1})
    &= (1-\tau)V(P_t) + \tau\E_{x\sim P_t}\big[W_t^f(x)f(x)\big]. \label{eq:eq1}
\end{align}
Therefore, since $W_t^f$ is defined by \eqref{eq:def_weight}, the evolution of $V(P_t)$ is determined by the average objective value over the selected $q$-quantile region.

\begin{proposition}\label{prop:final_transformation}
Consider the sequence $\{V(P_t)\}_{t \geq 0}$ generated by the IGO update~\eqref{eq:igo_ml_update_gaussian} applied to the strongly convex quadratic objective function~\eqref{eq:convex_quadratic_func}.
Define $\mu_{t,(i)} := \E_{x \sim P_t}\big[(f(x)-V(P_t))^i\big]$ for $i=2$ and $4$.
Then, for all $t \geq 0$, the following inequality holds:
\begin{align}\label{eq:eq_final}
    V(P_{t+1}) \leq V(P_t) - \tau(1-q)
    \sqrt{\tfrac{1}{\mu_{t,(4)} / \mu_{t,(2)}^2 + 3}}
    \sqrt{\mathstrut\mu_{t,(2)}}
\end{align}
That is, $V(P_t)$ is nonincreasing, i.e.,
$V(P_{t+1}) \leq V(P_t)$ for all $t \geq 0$.
\end{proposition}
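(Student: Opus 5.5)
Starting from \eqref{eq:eq1}, write $Y := f(x) - V(P_t)$ with $x\sim P_t$, so that $\E[Y]=0$, $\E[Y^2]=\mu_{t,(2)}$ and $\E[Y^4]=\mu_{t,(4)}$. Since $\E[W_t^f(x)]=1$ by \eqref{eq:normalized_weight}, \eqref{eq:eq1} becomes
\begin{align*}
V(P_{t+1}) - V(P_t) = \tau\,\E\bigl[W_t^f(x)\,Y\bigr] = \frac{\tau}{q}\,\E\bigl[Y\,\mathbb{I}\{Y\le k\}\bigr],
\end{align*}
where $k := \kappa_t^q - V(P_t)$ and $\Pr(Y\le k)=q$ by \eqref{eq:no_ties}. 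It therefore suffices to prove
\begin{align*}
\E\bigl[Y\,\mathbb{I}\{Y\le k\}\bigr] \le -q(1-q)\,\frac{\sqrt{\mu_{t,(2)}}}{\sqrt{\mu_{t,(4)}/\mu_{t,(2)}^2+3}}.
\end{align*}
Because $\E[Y]=0$, this is equivalent to a lower bound on $\E[Y\,\mathbb{I}\{Y>k\}] = -\E[Y\,\mathbb{I}\{Y\le k\}]$. Monotonicity of $V(P_t)$ then follows immediately, since the right-hand side of \eqref{eq:eq_final} subtracts a nonnegative quantity.

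The natural first step is to express the truncated mean as a covariance with the indicator. Set $I := \mathbb{I}\{Y\le k\}$, so $\E[I]=q$ and $\Var(I)=q(1-q)$. Then
\begin{align*}
\E[YI] = \Cov(Y,I) = -\E\bigl[Y(\mathbb{I}\{Y>k\}-(1-q))\bigr].
\end{align*}
The difficulty is that Cauchy--Schwarz yields an upper bound on $|\Cov|$, whereas we need a lower bound. The fourth moment suggests using a Paley--Zygmund / Berger-type inequality in the form $\E|Z| \ge (\E Z^2)^{3/2}/(\E Z^4)^{1/2}$, which follows from Hölder, applied to a suitable centered variable.

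The key observation is that the indicator $\mathbb{I}\{Y>k\}$ is the optimal (increasing) selection, so for any $b\in\mathbb{R}$ we have
\begin{align*}
\E[Y\,\mathbb{I}\{Y>k\}] \ge \E[(Y-b)\,\mathbb{I}\{Y>k\}] + b(1-q).
\end{align*}
A cleaner route is to compare $\E[Y\,\mathbb{I}\{Y>k\}]$ with $\E[Y\,g(Y)]$ for an increasing function $g$ taking values in $[0,1]$ with $\E[g]=1-q$: the threshold indicator maximizes this quantity by a rearrangement / bathtub argument. This allows a smooth test function to be chosen. The plan is to take
\begin{align*}
g(Y) = (1-q) + c\,\bigl(Y^2\text{-type or linear}\bigr)
\end{align*}
for instance $g = (1-q) + \lambda Y$, clipped to $[0,1]$. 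Without clipping this gives $\E[Yg] = \lambda\mu_{t,(2)}$, but $g$ must remain in $[0,1]$. To handle this, choose $g = (1-q) + \lambda\, Y\,\mathbb{I}\{\cdots\}$ or use an affine combination of $Y$ and $Y^2-\mu_{t,(2)}$, control the probability and cost of leaving $[0,1]$ via Chebyshev with the fourth moment, and optimize over $\lambda$. The constant $\mu_4/\mu_2^2+3$ suggests that the relevant quantity is $\E[(Y^2+\cdots)^2]$-like, e.g.\ $\E[(Y^2-\mu_2)^2]+\ldots$, combined via Cauchy--Schwarz with $q(1-q)$ factors arising from $\Var(I)$.

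The main obstacle is obtaining exactly the constant $q(1-q)/\sqrt{\mu_4/\mu_2^2+3}$. I would first try the direct approach: write $\E[Y(\mathbb{I}\{Y>k\}-(1-q))]$. For $Y>k$ the factor $\mathbb{I}-(1-q)$ equals $q$, and otherwise $-(1-q)$; in both cases the product $(Y-k)(\mathbb{I}-(1-q))$ is nonnegative with the indicator weights. Hence $\Cov(Y,\mathbb{I}\{Y>k\}) = \E[|Y-k|\,h]$ with $h\in\{q,1-q\}$, which is at least $\min(q,1-q)\,\E|Y-k|$. One then lower-bounds $\E|Y-k|$ by Hölder, $\E|Z|\ge (\E Z^2)^{3/2}/(\E Z^4)^{1/2}$ with $Z=Y-k$, and expands $\E Z^2 = \mu_2+k^2$ and $\E Z^4 = \mu_4 + 4k\mu_3 + 6k^2\mu_2 + k^4$, bounding the cross term with $\mu_3$ by $2k^2\mu_2 + \ldots$ (e.g.\ via AM-GM) until the ratio is bounded below by $\sqrt{\mu_2}/\sqrt{\mu_4/\mu_2^2+3}$ uniformly in $k$. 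Recovering the factor $q(1-q)$ rather than $\min(q,1-q)$ will require a more careful weighting, for instance writing $\E[|Y-k|h] \ge q(1-q)\,\E|Y-k|/\max(q,1-q)$, or alternatively using the exact identity $\Cov(Y,I) = \E[(Y-k)(I-q)]$ together with Cauchy--Schwarz in reverse.
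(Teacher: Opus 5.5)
Your last paragraph already contains a correct proof, by a different route from the paper's. You stopped short because of one misjudgment and one unfinished estimate.

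\textbf{The factor $q(1-q)$ is not an obstacle.} Your pivot identity
$\E[Y\,\mathbb{I}\{Y>k\}]=\E\bigl[(Y-k)(\mathbb{I}\{Y>k\}-(1-q))\bigr]=\E[|Y-k|\,h]\ge\min(q,1-q)\,\E|Y-k|$
already suffices, because $\min(q,1-q)\ge\min(q,1-q)\max(q,1-q)=q(1-q)$. Your own candidate $q(1-q)\E|Y-k|/\max(q,1-q)$ is just $\min(q,1-q)\E|Y-k|$. Writing $\mu_i$ for $\mu_{t,(i)}$, you in fact get the stronger decrement $\tfrac{\tau}{q}\min(q,1-q)\sqrt{\mu_2^3/(\mu_4+3\mu_2^2)}$, which equals $\tau\sqrt{\mu_2^3/(\mu_4+3\mu_2^2)}$ for $q\le 1/2$.

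\textbf{The uniform bound on $\E|Y-k|$ is the step you left open.} You need $\E|Y-k|\ge\sqrt{\mu_2^3/(\mu_4+3\mu_2^2)}$ for every $k$. The absorption you hint at fails if the ``$\ldots$'' is a $k$-independent multiple of $\mu_4$. For example, $4|k\mu_3|\le 2k^2\mu_2+2\mu_4$ leaves $3\mu_4$ at $k=0$, which exceeds $\mu_4+3\mu_2^2$ whenever $\mu_4>\tfrac32\mu_2^2$. (Also, the cross term in $\E(Y-k)^4$ is $-4k\mu_3$; the sign is harmless once you take absolute values.) Two ways to close the step:
\begin{enumerate}
\item Use $4|k\mu_3|\le 4|k|\sqrt{\mu_2\mu_4}\le 2\mu_2^2+2k^2\mu_4/\mu_2$. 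Then $\mu_2^3\,\E(Y-k)^4\le(\mu_4+3\mu_2^2)(\mu_2+k^2)^3$ follows by comparing coefficients of powers of $k^2$, and H\"older finishes.
\item More cleanly, take an independent copy $Y'$. The inequality $|Y-Y'|\le|Y-k|+|Y'-k|$ gives $\E|Y-k|\ge\tfrac12\E|Y-Y'|$. H\"older applied to the symmetric variable $Y-Y'$, with $\E(Y-Y')^2=2\mu_2$ and $\E(Y-Y')^4=2\mu_4+6\mu_2^2$, yields exactly $\sqrt{\mu_2^3/(\mu_4+3\mu_2^2)}$. This is where the $+3$ comes from.
\end{enumerate}
The rearrangement and clipped-test-function ideas earlier in your proposal are unnecessary and can be dropped.

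\textbf{Comparison with the paper.} The paper bounds the same covariance with the improved Chebyshev sum inequality of Akimoto, Auger and Hansen (Theorem~21 with $K=2$), applied to the comonotone pair $I_t^{\le}(x)$ and $-f(x)$. This gives $\E[I_t^{\le}(x)f(x)]\le qV(P_t)-\tfrac14\,\E|I_t^{\le}(x)-I_t^{\le}(y)|\,\E|f(x)-f(y)|$. The paper then computes $\E|I_t^{\le}(x)-I_t^{\le}(y)|=2q(1-q)$ and lower-bounds $\E|f(x)-f(y)|$ by the i.i.d.-difference moment estimate (their Proposition~22), which is the same estimate as in item 2 above.

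So the two proofs share the moment step and differ only in how the covariance between selection and $f$ is bounded below. The generic Chebyshev bound does not use the fact that the weight is two-valued. That is why it loses a factor $\max(q,1-q)$ relative to your exact pivot at the threshold.

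In exchange, the paper's route is modular. It applies unchanged to any nonincreasing rank-based weight $w$, with $\E|W_t^f(x)-W_t^f(y)|$ in place of $2q(1-q)$. Your pivot instead reduces the decrement to $\tau\,\E\bigl[|Y-k|\,|W_t^f(x)-1|\bigr]$. That gives a clean bound only when $|W_t^f-1|$ is bounded away from zero, as it is for the truncation weight.
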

\begin{proof}
We first show
\begin{align}\label{eq:inequality_V}
    V(P_{t+1}) \leq V(P_t) - \frac{\tau}{2q}\E_{x,y \overset{\mathrm{i.i.d.}}{\sim} P_t}\big[|I_t^\leq(x)-I_t^\leq(y)|\big]\sqrt{\tfrac{\mu_{t,(2)}^3}{\mu_{t,(4)}+3\mu_{t,(2)}^2}}.
\end{align}
Let $x, y \overset{\mathrm{i.i.d.}}{\sim} P_t$.
Since $W_t^f(x)=q^{-1}I_t^\leq(x)$, \eqref{eq:eq1} becomes
\begin{align}
    V(P_{t+1}) = (1-\tau)V(P_t) + \frac{\tau}{q}\E\big[I_t^\leq(x)f(x)\big].
    \label{eq:V_update_for_proof}
\end{align}
Since $P_t$ is Gaussian and $A \succ 0$, the scalar random variable
$f(x) = x^\top A x$ has a continuous distribution under $P_t$. 
Therefore, we have $\Pr\big(f(x) = f(y)\big)=0$,
and $I_t^\leq(x)$ is an almost surely nonincreasing function of $f(x)$. 
In particular, for $(P_t \otimes P_t)$-almost everywhere, 
\begin{align}
    \bigl(I_t^\leq(x)-I_t^\leq(y)\bigr)\bigl(-f(x)+f(y)\bigr) \geq 0.
\end{align}
Thus, the assumptions of the improved Chebyshev sum inequality \cite[Theorem~21]{akimoto2022ode} are satisfied by $I_t^\leq(x)$ and $-f(x)$.
Then, applying \cite[Theorem~21 with $K=2$]{akimoto2022ode}, we obtain
\begin{align}
    \MoveEqLeft[1]
    -\E\big[I_t^\leq(x)f(x)\big] \notag \\
    &\geq -\E\big[I_t^\leq(x)\big]\E\big[f(x)\big] 
    + \frac{1}{4}\E\big[|I_t^\leq(x)-I_t^\leq(y)|\big]
      \E\big[|f(x)-f(y)|\big].
\end{align}
Since $\E[I_t^\leq(x)] = q$ and $\E[f(x)] = V(P_t)$, this yields
\begin{align}
    \E\big[I_t^\leq(x)f(x)\big]
    \leq qV(P_t)
    - \frac{1}{4}\E\big[|I_t^\leq(x)-I_t^\leq(y)|\big]
      \E\big[|f(x)-f(y)|\big].
    \label{eq:If_upper_bound}
\end{align}

Moreover, since $f(x)$ is a quadratic form of a Gaussian random vector, it has a finite fourth moment. 
Hence \cite[Proposition~22]{akimoto2022ode} applies and gives
\begin{align}
    \E\big[|f(x)-f(y)|\big] \geq
    2\sqrt{\tfrac{\mu_{t,(2)}^3}{\mu_{t,(4)}+3\mu_{t,(2)}^2}}.
\end{align}
Substituting this into \eqref{eq:If_upper_bound}, we obtain
\begin{align}
    \E\big[I_t^\leq(x)f(x)\big]
    \leq qV(P_t)
    - \frac{1}{2}\E\big[|I_t^\leq(x)-I_t^\leq(y)|\big]
    \sqrt{\tfrac{\mu_{t,(2)}^3}{\mu_{t,(4)}+3\mu_{t,(2)}^2}}.
\end{align}
Finally, plugging this bound into \eqref{eq:V_update_for_proof} yields \eqref{eq:inequality_V}. 

Now we prove \eqref{eq:eq_final} by explicitly evaluating the term
\begin{align}
    \E_{x,y \overset{\mathrm{i.i.d.}}{\sim} P_t}\big[|I_t^\leq(x)-I_t^\leq(y)|\big]
\end{align}
appearing in \eqref{eq:inequality_V}.
This term is evaluated as follows.
Since $\Pr\big(f(x) = \kappa_t^q\big) = 0$, we have $I_t^\leq(x) \in \{0,1\}$ almost surely for $x \sim P_t$.
Hence, $|I_t^\leq(x)-I_t^\leq(y)| = \mathbb{I}\{I_t^\leq(x) \neq I_t^\leq(y)\}$.
Therefore, $\E\big[|I_t^\leq(x)-I_t^\leq(y)|\big] = \Pr\big(I_t^\leq(x)\neq I_t^\leq(y)\big)$.
Since $x, y \overset{\mathrm{i.i.d.}}{\sim} P_t$,
\begin{align}
    \Pr\big(I_t^\leq(x)=1,\ I_t^\leq(y)=0\big)
    &= \Pr\big(I_t^\leq(x)=1\big)\Pr\big(I_t^\leq(y)=0\big) = q(1-q).
\end{align}
Similarly, since $\Pr\big(I_t^\leq(x)=0,\ I_t^\leq(y)=1\big) = (1-q)q$, we obtain
\begin{align}\label{eq:expectation_indicator_norm}
    \E\big[|I_t^\leq(x)-I_t^\leq(y)|\big] = q(1-q)+(1-q)q = 2q(1 - q).
\end{align}
By substituting \eqref{eq:expectation_indicator_norm} into \eqref{eq:inequality_V}, we obtain \eqref{eq:eq_final}.
This completes the proof. \qed
\end{proof}

\subsection{Convergence of the Covariance Matrix}
\label{subsec:C_convergence}

Now we establish the convergence of the covariance matrix $C_t$.
\Cref{prop:variance_convergence} shows that, once $\mu_{t,(4)}$ is controlled by $\mu_{t,(2)}^2$, the variance of $f$ converges, and the convergence of the covariance matrix $C_t$ then follows immediately.

\begin{proposition}\label{prop:variance_convergence}
Consider IGO updates~\eqref{eq:igo_ml_update_gaussian} applied to the strongly convex quadratic objective function~\eqref{eq:convex_quadratic_func}.
There exists a constant $L \geq 1$ such that
\begin{align}\label{eq:moment_bound}
    \mu_{t,(4)} \leq L \mu_{t,(2)}^2 \qquad \text{for all $t \geq 0$} .
\end{align}
In fact, we may take $L = 15$.
Then, we have $\lim_{t\to\infty} \mu_{t,(2)} = 0$, implying
\begin{align}
    \lim_{t\to\infty} A C_t = O.
\end{align}
\end{proposition}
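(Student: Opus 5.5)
The plan is to compute the central moments of $f(x)$ explicitly through the cumulants of a Gaussian quadratic form. This gives \eqref{eq:moment_bound} with $L=15$. Then \Cref{prop:final_transformation} becomes a summable decrease of $V(P_t)$.

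\textbf{Step 1: reduce $f(x)-V(P_t)$ to a canonical form.} Write $x = m_t + C_t^{1/2} z$ with $z\sim\mathcal{N}(0,I)$, and set $B_t := C_t^{1/2} A C_t^{1/2}\succeq 0$. Then
\begin{align*}
  f(x) - V(P_t) = z^\top B_t z - \Tr(B_t) + 2 m_t^\top A C_t^{1/2} z .
\end{align*}
Diagonalize $B_t = U\diag(\lambda_1,\dots,\lambda_d)U^\top$ with $\lambda_i\ge 0$, rotate $z$ by $U^\top$, and put $c := U^\top C_t^{1/2} A m_t$. This gives
\begin{align*}
  f(x)-V(P_t) = \sum_{i}\bigl(\lambda_i (z_i^2-1) + 2c_i z_i\bigr),
\end{align*}
a sum of independent terms.

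\textbf{Step 2: compute cumulants and verify \eqref{eq:moment_bound}.} Cumulants add over independent terms, and $z_i^2-1$ has second and fourth cumulants $2$ and $48$. A direct computation (for instance via the cumulant generating function $-\tfrac12\log(1-2\lambda s)-\lambda s+\tfrac{2c^2 s^2}{1-2\lambda s}$) gives
\begin{align*}
  \kappa_2 = 2\textstyle\sum_i\lambda_i^2 + 4\sum_i c_i^2,
  \qquad
  \kappa_4 = 48\textstyle\sum_i\lambda_i^4 + 48\sum_i \lambda_i^2 c_i^2 .
\end{align*}
Since $\mu_{t,(2)}=\kappa_2$ and $\mu_{t,(4)}=\kappa_4+3\kappa_2^2$, it suffices to show $\kappa_4\le 12\kappa_2^2$. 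Expanding,
\begin{align*}
  12\kappa_2^2 = 48\bigl(\textstyle\sum_i\lambda_i^2\bigr)^2 + 192\sum_i\lambda_i^2\sum_j c_j^2 + 192\bigl(\sum_j c_j^2\bigr)^2 .
\end{align*}
Now $48(\sum\lambda_i^2)^2\ge 48\sum\lambda_i^4$ and $192\sum\lambda_i^2\sum c_j^2\ge 48\sum\lambda_i^2c_i^2$, which proves the claim with $L=15$. This bookkeeping of the mixed term $\lambda_i^2c_i^2$ is the main place an error could occur. I would double-check the fourth cumulant of $\lambda(z^2-1)+2cz$ by expanding its CGF to order $s^4$.

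\textbf{Step 3: summable decrease.} Substituting \eqref{eq:moment_bound} into \eqref{eq:eq_final} gives
\begin{align*}
  V(P_{t+1})\le V(P_t)-\frac{\tau(1-q)}{\sqrt{L+3}}\sqrt{\mu_{t,(2)}} .
\end{align*}
Since $A\succ0$, we have $V(P_t)\ge 0$, so telescoping yields $\sum_t\sqrt{\mu_{t,(2)}}\le \sqrt{L+3}\,V(P_0)/(\tau(1-q))<\infty$. Hence $\mu_{t,(2)}\to0$.

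\textbf{Step 4: conclude $AC_t\to O$.} From Step 2, $\mu_{t,(2)}\ge 2\sum_i\lambda_i^2=2\|B_t\|_F^2$, so $B_t\to O$. Finally,
\begin{align*}
  AC_t = A^{1/2}\bigl(A^{1/2}C_tA^{1/2}\bigr)A^{-1/2}.
\end{align*}
The matrix $A^{1/2}C_tA^{1/2}$ is similar to, and positive semidefinite like, $B_t$, and has the same eigenvalues. Its Frobenius norm is therefore $\|B_t\|_F\to0$, and conjugating by the fixed matrix $A^{1/2}$ gives $AC_t\to O$ (and in fact $C_t\to O$).
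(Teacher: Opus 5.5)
Your route is the paper's proof written in the eigenbasis of $B_t$. The paper cites the Mathai--Provost formula $c_4 = 48\bigl(\Tr(\Sigma^4)+4M^\top\Sigma^3M\bigr)$ with $\Sigma=\sqrt{A}C_t\sqrt{A}$ and $M=\sqrt{A}m_t$. It then bounds $\Tr(\Sigma^4)\le S^2$ and $M^\top\Sigma^3M\le ST$, where $S=\Tr(\Sigma^2)$ and $T=M^\top\Sigma M$, to get $c_4\le 12\mu_{t,(2)}^2$. Your Steps 3 and 4 match the paper's telescoping and $\mu_{t,(2)}\ge 2\Tr(\Sigma^2)$ arguments. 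Your direct expansion of $12\kappa_2^2$ is a little cleaner than the paper's monotonicity argument for $(1+4x)/(1+2x)^2$. Your Step 4 also handles the non-symmetry of $AC_t$ more explicitly than the paper does.

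However, the fourth cumulant in Step 2 is wrong, at exactly the spot you flagged. Your CGF is correct, and expanding it gives $[s^4]\,\frac{2c^2s^2}{1-2\lambda s} = 2c^2(2\lambda)^2 = 8\lambda^2c^2$. Multiplying by $4!$, the mixed contribution to $\kappa_4$ is $192\lambda^2c^2$, so
\[
\kappa_4 = 48\textstyle\sum_i\lambda_i^4 + 192\sum_i\lambda_i^2c_i^2 .
\]
This is the paper's formula in your coordinates, since $\Tr(\Sigma^4)=\sum_i\lambda_i^4$ and $M^\top\Sigma^3M=\sum_i\lambda_i^2c_i^2$. You appear to have dropped the factor $(2c_i)^2$ coming from the linear term.

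The conclusion survives, but the spare factor of $4$ you relied on is gone. You now need $192\sum_i\lambda_i^2c_i^2\le 192\sum_i\lambda_i^2\sum_j c_j^2$. This holds because $c_i^2\le\sum_j c_j^2$, which is the coordinate version of the paper's bound $M^\top\Sigma^3M\le\lambda_{\max}(\Sigma)^2\,T\le ST$. With this correction, $\kappa_4\le 12\kappa_2^2$ still holds, $L=15$ stands, and the rest of your proof goes through unchanged.
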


\begin{proof}
The existence of a constant $L = 15$ can be derived in a straightforward manner from classical results on Gaussian quadratic forms; details are given in Appendix~\ref{sec:appendix_A}.

First, we show that the second centered moment $\mu_{t,(2)}$ (i.e., the variance of $f$) converges to $0$.
By inequality~\eqref{eq:eq_final}, we obtain
\begin{align}\label{eq:descent_sqrt_mu2_prop}
    V(P_{t+1}) \leq V(P_t)-\tau(1-q)\sqrt{\tfrac{1}{L+3}}\sqrt{\mathstrut \mu_{t,(2)}}.
\end{align}
Let $\delta := \tau(1-q)(L+3)^{-1/2} \in (0, \tau)$.
Rearranging \eqref{eq:descent_sqrt_mu2_prop}, we obtain
\begin{align}
    \delta\sqrt{\mathstrut\mu_{t,(2)}} \le V(P_t)-V(P_{t+1}).
\end{align}
Summing from $t = 0$ to $n$ yields
\begin{align}
    \delta\sum_{t=0}^n \sqrt{\mathstrut\mu_{t,(2)}} \leq \sum_{t=0}^n \big(V(P_t)-V(P_{t+1})\big) = V(P_0) - V(P_{n+1}) \leq V(P_0).
\end{align}
Since $V(P_0) < \infty$, 
\begin{align}
    \sum_{t=0}^{\infty}\sqrt{\mathstrut\mu_{t,(2)}} \leq \frac{V(P_0)}{\delta}  < \infty.
\end{align}
Since $\mu_{t,(2)} \geq 0$ for all $t \geq 0$, it follows that $\lim_{t\to\infty}\sqrt{\mathstrut\mu_{t,(2)}} = \lim_{t\to\infty}\mu_{t,(2)} = 0$.

Second, we show that $A C_t \to O$.
Since
\begin{align}
    \mu_{t,(2)} = \Var_{x \sim P_t}[f(x)] = 2\Tr(AC_tAC_t) + 4m_t^\top AC_tA m_t,
\end{align}
we have $\mu_{t,(2)} \geq 2\Tr(AC_tAC_t)$.
Hence $\mu_{t,(2)} \to 0$ implies $\Tr(AC_tAC_t) \to 0$.
Noting that $\Tr(A C_t A C_t) \geq \lambda_{\max}^2(A C_t)$, we have $\lim_{t\to\infty} \lambda_{\max}^2(A C_t) = 0$, implying $\lim_{t\to\infty} A C_t = O$.
This completes the proof.\qed
\end{proof}


We note that the proof of $\mu_{t,(2)} \to 0$ does not use any Gaussian-specific property.
Under~\eqref{eq:no_ties}, for IGO updates~\eqref{eq:igo_ml_update_gaussian}, this part of the argument only requires the moment bound~\eqref{eq:moment_bound}.

\subsection{Convergence of the Mean Vector}
\label{subsec:m_convergence}

We prove the convergence of the mean vector to the optimal solution as follows.
First, \Cref{lem:level_set_m} shows that $m_t^\top A m_t$ has a limit
$V' \geq 0$.
Therefore, it suffices to exclude the case $V' > 0$.
We prove it by contradiction.
\Cref{lem:existence_rho}, together with Assumption~\ref{asup:cond_num_lemma_assumption}, implies that $\Tr(A C_t)$ must grow asymptotically.
This contradicts \Cref{prop:variance_convergence}, and the resulting contradiction argument is formalized in \Cref{prop:m_convergence}.

To establish the convergence of the mean vector, we need an additional control on the shape of the covariance matrix, summarized in Assumption~\ref{asup:cond_num_lemma_assumption}.
While the previous subsection showed that $C_t \to O$, this does not prevent the covariance from becoming extremely anisotropic during the collapse.
A trivial way to avoid this difficulty is to assume uniform boundedness of the condition number of the covariance matrix during the whole optimization process, which is, however, not valid.
Instead, we construct a more realistic assumption on the shape of the covariance matrix.

For this purpose, consider the change in the trace $\Tr(A C_t)$ at each iteration. 
From the IGO update~\eqref{eq:igo_ml_update_gaussian} for the covariance matrix, let $\theta=(m,C)\in\Theta$ and define 
\begin{align}
    \Delta_C(\theta) :=
    \tau C^\star + \tau(1-\tau)(m^\star-m)(m^\star-m)^\top - \tau C.
\end{align}
Then $\Delta_C(\theta) \succeq - \tau C$, and we obtain $\Tr(AC_{t+1}) = \Tr(A C_t) + \Tr(A\Delta_C(\theta_t))$,
which is lower-bounded by $(1 - \tau) \Tr(A C_t)$. 
Based on this observation, we define
\begin{equation}
    \gamma := \inf_{\theta \in \Theta} \left(1+\frac{\Tr(A \Delta_C(\theta))}{\Tr(A C)}\right).\label{eq:beta}
\end{equation}
It is guaranteed that $\gamma \geq 1 - \tau$. 
With this quantity, we pose the following assumption on the condition number $\Cond(A C)$. 

\begin{assumption}\label{asup:cond_num_lemma_assumption}
Let $\{C_t\}_{t\geq0}$ be the sequence of covariance matrices generated by the IGO updates~\eqref{eq:igo_ml_update_gaussian} applied to the strongly convex quadratic objective function~\eqref{eq:convex_quadratic_func}.
Assume that there exists $R < \infty$ such that
\begin{align}
    \liminf_{T \to \infty} \frac{1}{T} \sum_{t = 0}^{T-1}\ind{\Cond(A C_t) \leq R} > \frac{ - \log(\gamma)}{ \log(\rho) - \log(\gamma)},\label{eq:asm}
\end{align}
where $\rho > 1$ denotes the constant appearing in \Cref{lem:existence_rho}.
\end{assumption}
The assumption states that $\Cond(A C_t)$ need not be bounded during optimization, but is bounded by a constant $R$ with higher frequency than the right-hand side of \eqref{eq:asm} would suggest. 
This is a more reasonable assumption than the boundedness of the condition number over the optimization process, as the bound can be violated infinitely many times. 

\begin{figure}[t]
    \centering
    \includegraphics[width=\hsize,clip,trim=0 0 0 32]{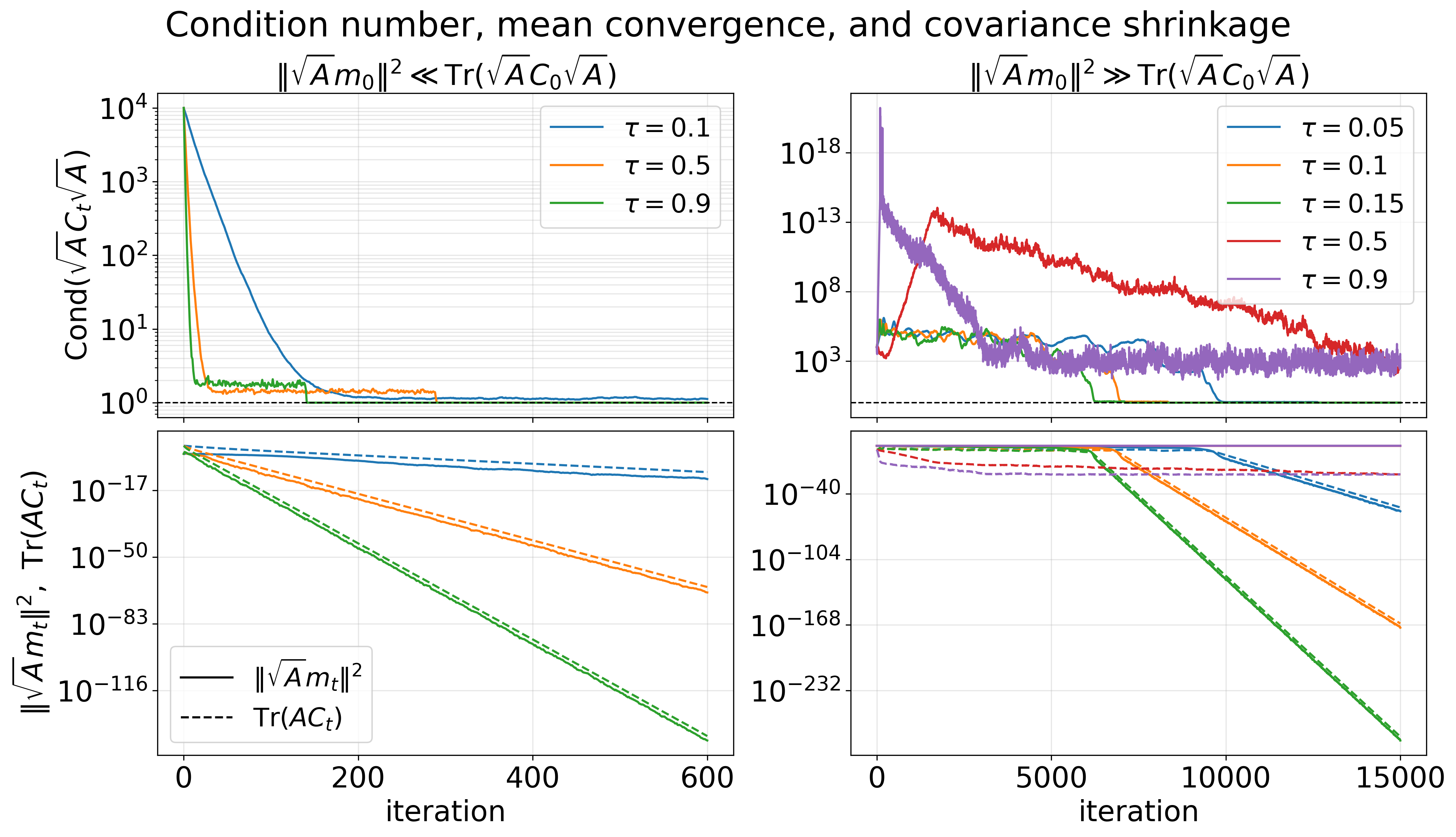}
    \caption{
    Empirical behavior of the condition number, mean convergence, and covariance shrinkage.
    Experiments are performed on the quadratic objective $f(x)=x^\top A x$ with $d=10$, $A=\diag(10^0,10^{4/9},\ldots,10^4)$, and the quantile level $q=0.3$.
    We evaluate two initializations where the ratio of $\|\sqrt{A}m_0\|^2$ to $\Tr(A C_0)$ is approximately $10^{-4}$ and $10^4$, respectively.
    }
    \label{fig:cond_num_experiment}
\end{figure}

To clarify the scope of Assumption~\ref{asup:cond_num_lemma_assumption},
we visualize the algorithmic behavior under two distinct initialization conditions in \Cref{fig:cond_num_experiment}.
Here, $\sqrt{A}$ is defined by $\sqrt{A}=Q\Lambda^{1/2}Q^\top$, where $A=Q\Lambda Q^\top$ is the eigendecomposition of $A$ and $\Lambda^{1/2}$ is the diagonal matrix of the square roots of the eigenvalues.
In the regime where $\|\sqrt{A}m_0\|^2 \ll \Tr(\sqrt{A}C_0\sqrt{A})$,
$\Cond(\sqrt{A}C_t\sqrt{A})$ rapidly approaches $1$,
demonstrating that the covariance matrix converges to a scalar multiple of $A^{-1}$.
Consequently, the assumption is empirically validated under this condition.
In contrast, when $\|\sqrt{A}m_0\|^2 \gg \Tr(\sqrt{A}C_0\sqrt{A})$,
serious anisotropy can emerge prior to mean convergence if a large learning rate is employed.
However, given a sufficiently small learning rate, the evolution of the shape of the covariance is better regulated, and the condition number eventually approaches approximately $1$.
Therefore, this assumption serves to exclude overly aggressive initial conditions for a given learning rate, where the collapse of the covariance is much faster and more anisotropic than mean convergence.

\begin{lemma}\label{lem:level_set_m}
Consider the sequences $\{P_t\}_{t \geq 0}$ generated by the IGO updates~\eqref{eq:igo_ml_update_gaussian} applied to the strongly convex quadratic objective function~\eqref{eq:convex_quadratic_func}.
There exists $V' \geq 0$ such that
$\lim_{t \to \infty} V(P_t) = \lim_{t \to \infty} m_t^\top A m_t = V'$.
\end{lemma}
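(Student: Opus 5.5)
The plan is to combine the monotonicity from \Cref{prop:final_transformation} with the covariance collapse from \Cref{prop:variance_convergence}. Throughout I use the Gaussian identity
\begin{align}
    V(P_t) = \E_{x\sim P_t}[x^\top A x] = m_t^\top A m_t + \Tr(AC_t),
\end{align}
so the expected objective value splits into a mean part and a covariance part.

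First, I show that $V(P_t)$ has a limit. By \Cref{prop:final_transformation}, $V(P_{t+1}) \le V(P_t)$ for all $t\ge 0$, so $\{V(P_t)\}$ is nonincreasing. Since $A \succ 0$, $f(x)=x^\top A x \ge 0$ for every $x$, hence $V(P_t)\ge 0$. A nonincreasing sequence bounded below converges, so I set $V' := \lim_{t\to\infty} V(P_t)$, and then $V' \ge 0$.

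Second, I transfer the limit to $m_t^\top A m_t$. By \Cref{prop:variance_convergence}, $AC_t \to O$, and therefore $\Tr(AC_t)\to 0$ by continuity of the trace. I should be careful that \Cref{prop:variance_convergence} gives convergence of the matrix $AC_t$ itself, not just of its largest eigenvalue. In any case, $AC_t$ is similar to $\sqrt{A}C_t\sqrt{A}\succeq 0$, so $\Tr(AC_t)$ is the sum of its nonnegative eigenvalues. It is therefore at most $d\,\lambda_{\max}(AC_t)$, and this tends to $0$. Subtracting, $m_t^\top A m_t = V(P_t) - \Tr(AC_t) \to V' - 0 = V'$, which is the claim.

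I do not expect a real obstacle here. The only points to verify are the decomposition of $V(P_t)$ and that $\Tr(AC_t)\to 0$ follows from the earlier proposition, and both are routine.
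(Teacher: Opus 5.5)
Your proposal is correct and follows the paper's proof essentially verbatim. Monotonicity from \Cref{prop:final_transformation} together with $V(P_t)\ge 0$ gives the limit $V'$. The decomposition $V(P_t)=m_t^\top A m_t+\Tr(AC_t)$, combined with $\Tr(AC_t)\to 0$ from \Cref{prop:variance_convergence}, then transfers that limit to $m_t^\top A m_t$. Your extra bound $\Tr(AC_t)\le d\,\lambda_{\max}(AC_t)$ is a harmless refinement: it makes the trace step rely only on $\lambda_{\max}(AC_t)\to 0$, which is exactly what that proposition's proof establishes.
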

\begin{proof}
By \Cref{prop:final_transformation}, the sequence $\{V(P_t)\}_{t \geq 0}$ is monotone nonincreasing.
Moreover, since $f(x) \geq 0$, we have $V(P_t) \geq 0$ for all $t \geq 0$, and hence the sequence is bounded below.
Therefore, $\{V(P_t)\}_{t \geq 0}$ converges to some $V' \geq 0$.

Recall that, for $P_t = \mathcal{N}(m_t, C_t)$ and $f(x) = x^\top A x$, $V(P_t) = m_t^\top A m_t + \Tr(AC_t)$.
By \Cref{prop:variance_convergence}, we have $AC_t \to O$, and hence $\lim_{t\to\infty}\Tr(AC_t) = 0$.
Therefore, $\lim_{t\to\infty}m_t^\top A m_t = \lim_{t\to\infty} V(P_t) - \lim_{t\to\infty}\Tr(A C_t) = V'$.
This completes the proof. \qed
\end{proof}

\begin{lemma}\label{lem:asymptotic_updates}
Consider the IGO updates~\eqref{eq:igo_ml_update_gaussian} applied to the strongly convex quadratic objective function~\eqref{eq:convex_quadratic_func} with the quantile level $q \in (0, 1/2)$. 
Define
\begin{align}
    \alpha_q := \Phi^{-1}(q),
    \qquad
    \lambda_q := \frac{\phi(\alpha_q)}{q},
\end{align}
where $\phi$ and $\Phi$ are the density and cumulative distribution functions of the standard normal distribution, respectively.
Let $V'$ be the limit given by \Cref{lem:level_set_m}, and assume that $V' > 0$.
For each $t$, let
\begin{align}\label{eq:u_and_v}
    u_t := \sqrt{A} C_t A m_t,
    \qquad
    v_t := \sqrt{C_t} A m_t.
\end{align}
Then, for $t$ sufficiently large, if $\Cond(A C_t) \leq R$ for some constant $R < \infty$, the IGO update~\eqref{eq:igo_ml_update_gaussian} for the covariance matrix in the $\sqrt{A}$-transformed coordinates satisfies
\begin{align}\label{eq:asymptotic_updates_formulas}
    \sqrt{A}C_{t+1}\sqrt{A}
    &= \sqrt{A}C_t\sqrt{A} 
    -\bigl(\tau\alpha_q\lambda_q+\tau^2\lambda_q^2\bigr)\frac{u_tu_t^\top}{\|v_t\|^2}
    + o\big(\lambda_{\max}(AC_t)\big).
\end{align}
\end{lemma}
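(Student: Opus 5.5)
The plan is to reduce the selection step, asymptotically, to truncating a standard normal at a half-space, and to compute the truncated moments exactly. Write $x = m_t + \sqrt{C_t}\,z$ with $z\sim\mathcal N(0,I)$. Then
\[
f(x) = m_t^\top A m_t + 2\, v_t^\top z + z^\top \sqrt{C_t} A \sqrt{C_t}\, z .
\]
Put $e_t := v_t/\|v_t\|$ and $\epsilon_t := \lambda_{\max}(AC_t)/\|v_t\|$. The selection event $\{f(x)\le \kappa_t^q\}$ is then
\[
\bigl\{ e_t^\top z + \tfrac{1}{2\|v_t\|} z^\top \sqrt{C_t}A\sqrt{C_t} z \le c_t \bigr\}
\]
for a suitable threshold $c_t$. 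The quadratic part is bounded by $\tfrac{1}{2}\epsilon_t\|z\|^2$, since $\sqrt{C_t}A\sqrt{C_t}$ is similar to $AC_t$.

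\textbf{Step 1: the quadratic term is negligible.} This is where $V'>0$ and $\Cond(AC_t)\le R$ enter. We have
\[
\|v_t\|^2 = m_t^\top A C_t A m_t \ \ge\ \lambda_{\min}(\sqrt A C_t\sqrt A)\, m_t^\top A m_t \ \ge\ \frac{\lambda_{\max}(AC_t)}{R}\cdot \frac{V'}{2}
\]
for large $t$, by \Cref{lem:level_set_m}. Hence $\epsilon_t \le \sqrt{2R\lambda_{\max}(AC_t)/V'}$, and this tends to $0$ by \Cref{prop:variance_convergence}. Consequently $c_t \to \alpha_q$. Moreover, the symmetric difference between the selection set and the half-space $H_t=\{e_t^\top z\le \alpha_q\}$ has Gaussian probability $O(\epsilon_t)$ up to log factors: on $\{\|z\|^2\le K\}$ the two sets differ only in a slab of width $O(\epsilon_t K)$ around the hyperplane, and the tail beyond $K$ is handled by a Gaussian tail bound.

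\textbf{Step 2: the exact half-space computation.} Rotational invariance reduces the truncated moments to one-dimensional ones. We get
\[
q^{-1}\E[z\,\mathbb I_{H_t}] = -\lambda_q e_t, \qquad q^{-1}\E[zz^\top \mathbb I_{H_t}] = I - \alpha_q\lambda_q\, e_te_t^\top .
\]
Therefore
\[
m_t^\star - m_t = -\lambda_q\sqrt{C_t}e_t + \sqrt{C_t}\,r_t, \qquad C_t^\star = \sqrt{C_t}\bigl(I-(\alpha_q\lambda_q+\lambda_q^2)e_te_t^\top + S_t\bigr)\sqrt{C_t},
\]
where $r_t$ and $S_t$ are the errors coming from replacing the true selection set by $H_t$. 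Substituting into \eqref{eq:igo_ml_update_gaussian}, the main terms give
\[
C_{t+1} = C_t - \tau(\alpha_q\lambda_q+\lambda_q^2)\sqrt{C_t}e_te_t^\top\sqrt{C_t} + \tau(1-\tau)\lambda_q^2\sqrt{C_t}e_te_t^\top\sqrt{C_t} + \cdots,
\]
and the coefficient collapses to $\tau\alpha_q\lambda_q+\tau^2\lambda_q^2$. Conjugating by $\sqrt A$ and using $\sqrt A\sqrt{C_t}v_t = \sqrt A C_t A m_t = u_t$ yields exactly $u_tu_t^\top/\|v_t\|^2$.

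\textbf{Step 3: bounding the error terms.} I need $\|\sqrt A\sqrt{C_t}\,S_t\sqrt{C_t}\sqrt A\| = o(\lambda_{\max}(AC_t))$, and likewise for the terms involving $r_t$. Since $\|\sqrt A\sqrt{C_t}\|^2=\lambda_{\max}(AC_t)$, it suffices to show $r_t\to0$ and $S_t\to O$. Both errors are expectations of $z$ or $zz^\top$ times the difference of two indicators, divided by $q$. By Cauchy--Schwarz they are bounded by $(\E\|z\|^4)^{1/2}$ times the square root of the symmetric-difference probability from Step 1, which tends to $0$.

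I expect Step 1 to be the main obstacle. The difficulty is controlling the data-dependent threshold $c_t$, that is, the quantile $\kappa_t^q$ in $z$-coordinates. I would first show that $c_t\to\alpha_q$ by a sandwich argument: the selection set contains $\{e^\top z\le c_t-\epsilon_t K\}\cap\{\|z\|^2\le K\}$ and is contained in the analogous set with $+\epsilon_t K$. Both sets must have probability $q$ up to the tail mass, and letting $\epsilon_t\to0$ and then $K\to\infty$ pins $c_t$ to $\alpha_q$. The restriction $q<1/2$ is not needed for this expansion; presumably it is used later to fix the sign of $\alpha_q$.
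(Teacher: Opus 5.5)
Your proposal is correct and follows essentially the same route as the paper's proof in Appendix~B. It uses the same bound $\|v_t\|^2\ge\lambda_{\min}(\sqrt{A}C_t\sqrt{A})\,m_t^\top A m_t$ to get $\lambda_{\max}(AC_t)/\|v_t\|\to 0$, a sandwich argument pinning the standardized threshold to $\alpha_q$, a Cauchy--Schwarz transfer of the truncated moments to the half-space $\{e_t^\top z\le\alpha_q\}$, and the exact one-dimensional truncated-normal moments substituted into the update. The only difference is cosmetic: you truncate deterministically on $\{\|z\|^2\le K\}$ where the paper uses convergence in probability of the quadratic remainder. In your sandwich, the outer set should be the union with $\{\|z\|^2>K\}$ rather than an intersection; alternatively it can simply be $\{e_t^\top z\le c_t\}$, since the quadratic term is nonnegative.
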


\begin{proof}[Sketch]
We give a proof sketch focusing on the key geometric idea, while deferring the rigorous details to Appendix~\ref{sec:appendix_B}.

The proof has two conceptual steps.
First, we show that in standardized coordinates the quadratic truncation boundary becomes asymptotically flat, so the selection event is asymptotically equivalent to a Gaussian half-space.
Second, we evaluate the corresponding truncated Gaussian moments and plug them into the IGO updates~\eqref{eq:igo_ml_update_gaussian}.

Let $X_t = m_t + \sqrt{C_t} Z_t$, where $Z_t \sim \mathcal{N}(0,I)$, and define
\begin{align}
    B_t := \sqrt{C_t} A \sqrt{C_t}, \qquad
    e_t := \frac{v_t}{\|v_t\|}, \qquad
    \alpha_t := \frac{\kappa_t^q - m_t^\top A m_t}{2\|v_t\|}.
\end{align}
Then the truncation event can be rewritten as
\begin{align}
    f(X_t)\le \kappa_t^q
    \iff
    e_t^\top Z_t + \frac{Z_t^\top B_t Z_t}{2\|v_t\|} \le \alpha_t.
\end{align}
The key point is that the quadratic correction is asymptotically negligible. 
Indeed, writing $M_t:=\sqrt{A}m_t$ and $\Sigma_t:=\sqrt{A}C_t\sqrt{A}$, we have
$\|M_t\|^2\to V'>0$, $\Cond(\Sigma_t)\le R$, and $AC_t\to O$, which implies $\|B_t\|/\|v_t\| \to 0$.
Hence, on the Gaussian scale $Z_t=O_p(1)$, the curved boundary becomes asymptotically flat in standardized coordinates, so the selected region is asymptotically equivalent to a half-space orthogonal to $e_t$; see \Cref{fig:selection-linearization}.

More precisely, letting $Y_t:=e_t^\top Z_t\sim\mathcal N(0,1)$, we can write
\begin{align}
    S_t:=\{Y_t+R_t\le \alpha_t\},
    \qquad
    R_t:=\frac{Z_t^\top B_t Z_t}{2\|v_t\|}=o_p(1).
\end{align}
Since $\kappa_t^q$ is the $q$-quantile of $f(X_t)$, the selected mass is exactly $q$, which forces
\begin{align}
    \alpha_t \to \alpha_q := \Phi^{-1}(q) < 0.
\end{align}
Therefore, the true selection event is asymptotically equivalent to the Gaussian half-space $H_t := \{Y_t \leq \alpha_q\}$; see \Cref{fig:selection-quantile}.

To evaluate the truncated Gaussian moments, write
\begin{align}
    Z_t = Y_t e_t + U_t,
    \qquad U_t = (I-e_te_t^\top)Z_t.
\end{align}
Since the events $S_t$ and $H_t$ are asymptotically equivalent, the corresponding selected moments differ only by $o(1)$.
Moreover, under the half-space truncation $H_t=\{Y_t\le \alpha_q\}$, the above decomposition reduces the computation to the standard one-dimensional truncated Gaussian moments in the $e_t$-direction.
\begin{align}\label{eq:psi}
    \psi_t := \E\left[\frac{1}{q}\mathbb{I}\{S_t\} Z_t\right]
    = -\lambda_q e_t + o(1),
\end{align}
and
\begin{align}\label{eq:Psi}
    \Psi_t := \E\left[\frac{1}{q}\mathbb{I}\{S_t\} Z_t Z_t^\top\right]
    = I - \alpha_q\lambda_q\, e_t e_t^\top + o(1),
\end{align}
where $\lambda_q:=\phi(\alpha_q)/q$.

Finally, substituting these asymptotic moments into the weighted mean vector and the weighted covariance matrix~\eqref{eq:weighted_param} yields
\begin{align}
    m_t^\star = m_t + \sqrt{C_t}\psi_t,
    \qquad
    C_t^\star = \sqrt{C_t}(\Psi_t-\psi_t\psi_t^\top)\sqrt{C_t}.
\end{align}
Using
\begin{align}
    \sqrt{A}\sqrt{C_t}e_t = \frac{\sqrt{A}C_t A m_t}{\|v_t\|} = \frac{u_t}{\|v_t\|},
\end{align}
together with \eqref{eq:psi} and \eqref{eq:Psi}, we obtain the claimed expansion~\eqref{eq:asymptotic_updates_formulas}.
This completes the proof sketch. \qed

\begin{figure*}[t]
\centering

\begin{subfigure}[t]{0.49\textwidth}
\centering
\resizebox{\hsize}{!}{






\begin{tikzpicture}[x=0.92cm,y=0.7cm]
  \draw[axis] (-2.3,0) -- (2.4,0) node[below right=-1pt] {$z_1$};
  \draw[axis] (0,-1.95) -- (0,2.05) node[above left=-1pt] {$z_2$};

  \def\ang{22}      
  \def\a{-0.45}     
  \def\k{0.05}      

  \begin{scope}[rotate=\ang]
    \fill[region] (-2.8,-2.2) rectangle (\a,2.2);
    \draw[boundary] (\a,-1.85) -- (\a,1.9);

    \draw[exactboundary, domain=-2.10:2.10, samples=120]
      plot ({\a - \k*\x*\x},{\x});

    \draw[ew] (-1.55,1.15) -- (-0.65,1.15);
    \node[vlabel, above=1pt] at (-1.10,1.15) {$e_t$};

    \node[vlabel, text=cExact, align=left] at (1.92,0.10)
      {$e_t^\top z+\dfrac{z^\top B_t z}{2\|v_t\|}=\alpha_t$};
  \end{scope}
  
  \node[vlabel, align=center] at (-1.35,-1.05)
      {$e_t^\top z \le \alpha_t$};
\end{tikzpicture}
}
\caption{The quadratic truncation boundary is locally approximated by the half-space $e_t^\top z \leq \alpha_t$ in standardized coordinates.}
\label{fig:selection-linearization}
\end{subfigure}
\hfill
\begin{subfigure}[t]{0.49\textwidth}
\centering
\resizebox{\hsize}{!}{





\begin{tikzpicture}[x=0.82cm,y=5.85cm] 
  \draw[axis] (-3.1,0) -- (3.2,0);
  \draw[axis] (0,0) -- (0,0.50);

  \node[vlabel, below=2pt] at (1.8,0) {$Y_t=e_t^\top Z_t \sim \mathcal{N}(0,1)$};

  \path[fill=cSelectFill]
    plot[domain=-3.0:-0.55,samples=120] (\x,{0.4*exp(-\x*\x/2)})
    -- (-0.55,0) -- (-3.0,0) -- cycle;
  \draw[line width=1.25pt, draw=cSelectLine]
    plot[domain=-3.0:3.0,samples=160] (\x,{0.4*exp(-\x*\x/2)});

  \draw[boundary] (-0.55,0) -- (-0.55,{0.4*exp(-0.55*0.55/2)});
  
  \node[vlabel] (aq) at (-1.8,-0.09) {$\alpha_q=\Phi^{-1}(q)$};
  \draw[guide, ->] (aq.east) -- (-0.6,-0.015);

  \node[important, text=cSelectLine!90!black] at (-2.3,0.25) {$\Pr(H_t)=q$};
  \node[vlabel] at (-2.95,0.16) {$H_t=\{Y_t\le \alpha_q\}$};

  \node[vlabel, above=1pt] at (-1.1,0.38) {$\alpha_t\to \alpha_q$};
\end{tikzpicture}
}%
\caption{Because the selected mass is $q$, the threshold converges to $\alpha_q=\Phi^{-1}(q)$, and the selection event is asymptotically equivalent to the half-space $H_t=\{Y_t\le \alpha_q\}$.}
\label{fig:selection-quantile}
\end{subfigure}
\hfill

\caption{
Geometric interpretation of the proof. 
Left: Local linearization of the truncation boundary. 
Right: Convergence of the threshold to the Gaussian $q$-quantile.
}
\label{fig:proof-sketch}
\end{figure*}

\end{proof}

\begin{lemma}\label{lem:existence_rho}
Consider the IGO updates~\eqref{eq:igo_ml_update_gaussian} applied to the strongly convex quadratic objective function~\eqref{eq:convex_quadratic_func} with the quantile level $q \in (0, 1/2)$. 
Define
\begin{align}
    \tau_{\mathrm{crit}}(q):= -\frac{\alpha_q}{\lambda_q},
    \qquad
    r_t := \Tr(A C_t).
\end{align}

Suppose $\tau \in (0, \tau_{\mathrm{crit}}(q))$.
Let $V'$ be the limit given by \Cref{lem:level_set_m}, and assume that $V' > 0$.
Then, for sufficiently large $t$, if $\Cond(A C_t) \leq R$ for some constant $R < \infty$, there exists a constant $\rho > 1$
such that $r_{t+1} \geq \rho \, r_t$.
\end{lemma}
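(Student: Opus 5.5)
The plan is to take the trace of the expansion in \Cref{lem:asymptotic_updates} and show that, when $\tau<\tau_{\mathrm{crit}}(q)$ and the condition number is bounded, the rank-one correction increases $r_t$ by a fixed fraction of $r_t$. The $o(\cdot)$ remainder is then absorbed for large $t$.

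\textbf{Step 1: take the trace.} Write $\Sigma_t:=\sqrt{A}C_t\sqrt{A}$ and $M_t:=\sqrt{A}m_t$. Then $r_t=\Tr(AC_t)=\Tr(\Sigma_t)$, and $\Sigma_t$ is similar to $AC_t$, so it has the same eigenvalues and the same condition number. Taking the trace of \eqref{eq:asymptotic_updates_formulas} gives, for $t$ large with $\Cond(AC_t)\le R$,
\begin{align}
    r_{t+1} = r_t + \tau\lambda_q\bigl(-\alpha_q-\tau\lambda_q\bigr)\frac{\|u_t\|^2}{\|v_t\|^2} + o\bigl(\lambda_{\max}(AC_t)\bigr).
\end{align}
Since $q<1/2$, we have $\alpha_q<0$ and $\lambda_q>0$. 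The condition $\tau<\tau_{\mathrm{crit}}(q)=-\alpha_q/\lambda_q$ is exactly what makes the constant $c_q:=\tau\lambda_q(-\alpha_q-\tau\lambda_q)$ strictly positive.

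\textbf{Step 2: lower-bound the Rayleigh-type ratio.} We have $u_t=\Sigma_t M_t$ and $\|v_t\|^2=m_t^\top AC_tAm_t=M_t^\top\Sigma_t M_t$. Hence
\begin{align}
    \frac{\|u_t\|^2}{\|v_t\|^2}=\frac{M_t^\top\Sigma_t^2M_t}{M_t^\top\Sigma_tM_t}\ge\lambda_{\min}(\Sigma_t)\ge\frac{\lambda_{\max}(\Sigma_t)}{R}\ge\frac{r_t}{dR}.
\end{align}
The first inequality follows by writing $\Sigma_t^{1/2}M_t=w$, so the ratio becomes $w^\top\Sigma_t w/\|w\|^2$. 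The denominator is nonzero because $\|M_t\|^2\to V'>0$ and $C_t\succ0$. For the remainder, note $\lambda_{\max}(AC_t)\le r_t$, so the $o(\lambda_{\max}(AC_t))$ term is $o(r_t)$. That is, it is bounded by $\varepsilon_t r_t$ with $\varepsilon_t\to0$ along the iterations where $\Cond(AC_t)\le R$.

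\textbf{Step 3: conclude.} Combining Steps 1 and 2 gives
\begin{align}
    r_{t+1}\ge r_t\Bigl(1+\frac{c_q}{dR}-\varepsilon_t\Bigr).
\end{align}
Choose $t$ large enough that $\varepsilon_t\le c_q/(2dR)$. Then $\rho:=1+c_q/(2dR)>1$ works, and it depends only on $\tau,q,d,R$.

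\textbf{Main obstacle.} The delicate point is the uniformity of the remainder. One must check that the $o(\lambda_{\max}(AC_t))$ in \Cref{lem:asymptotic_updates} is uniform over all large $t$ with $\Cond(AC_t)\le R$, and is not merely valid along a fixed subsequence. I would verify this from the proof in Appendix~\ref{sec:appendix_B}: the error there is controlled by $\|B_t\|/\|v_t\|\le\sqrt{R\lambda_{\max}(\Sigma_t)/\|M_t\|^2}$. This bound tends to zero uniformly, given $AC_t\to O$ (\Cref{prop:variance_convergence}) and $\|M_t\|^2\to V'>0$ (\Cref{lem:level_set_m}).
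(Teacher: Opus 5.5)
Your proposal is correct and follows essentially the same route as the paper: you take the trace of the expansion in \Cref{lem:asymptotic_updates}, bound $\|u_t\|^2/\|v_t\|^2$ below by $\lambda_{\min}(AC_t)\geq r_t/(Rd)$, absorb the remainder as $o(r_t)$, and set $\rho = 1 + c_q/(2Rd)$, where your $c_q$ is exactly the paper's $\beta_q(\tau) = -(\tau\alpha_q\lambda_q + \tau^2\lambda_q^2)$. Your remark on the uniformity of the remainder over the iterations with $\Cond(AC_t)\leq R$ addresses a point the paper leaves implicit, and your justification through the bound on $\|B_t\|/\|v_t\|$ is sound.
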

\begin{proof}
Let $\beta_q(\tau) := -(\tau\alpha_q\lambda_q+\tau^2\lambda_q^2)$.
Since $\tau < \tau_{\mathrm{crit}}(q) = -\alpha_q/\lambda_q$, we have $\beta_q(\tau) > 0$.
By \Cref{lem:asymptotic_updates}, we have
\begin{align}
    \sqrt{A}C_{t+1}\sqrt{A} = \sqrt{A}C_t\sqrt{A} + \beta_q(\tau)\,\frac{u_tu_t^\top}{\|v_t\|^2}
    + o\bigl(\lambda_{\max}(AC_t)\bigr).
\end{align}
Taking the trace yields
\begin{align}\label{eq:equation_r}
    r_{t+1} = r_t + \beta_q(\tau)\frac{\|u_t\|^2}{\|v_t\|^2} + o\bigl(\lambda_{\max}(A C_t)\bigr).
\end{align}
By the definitions of $u_t$ and $v_t$ in \eqref{eq:u_and_v}, we have
\begin{align}\label{eq:u_v_bound}
    \frac{\|u_t\|^2}{\|v_t\|^2}
    = \frac{m_t^\top A C_t A C_t A m_t}{m_t^\top A C_t A m_t}
    \geq \lambda_{\min}(A C_t).
\end{align}
Substituting this bound~\eqref{eq:u_v_bound} into \eqref{eq:equation_r}, we obtain
\begin{align}\label{eq:trace_lower_pre}
    r_{t+1} \geq r_t + \beta_q(\tau)\lambda_{\min}(AC_t)
    + o\big(\lambda_{\max}(AC_t)\big).
\end{align}
Since $\Cond(A C_t) \leq R$ and $r_t \leq d\,\lambda_{\max}(AC_t)$, we have 
\begin{align}
    \lambda_{\min}(A C_t) \geq \frac{1}{R}\lambda_{\max}(AC_t) \geq \frac{1}{Rd}r_t.
\end{align}
Moreover, since $\lambda_{\max}(A C_t) \leq r_t$, we have $o(\lambda_{\max}(A C_t)) = o(r_t)$.
Substituting this into \eqref{eq:trace_lower_pre}, we obtain
\begin{align}
    r_{t+1} \geq \left(1+\frac{\beta_q(\tau)}{Rd}\right)r_t
    + o(r_t).
\end{align}
Dividing both sides by $r_t > 0$, we get
\begin{align}
    \frac{r_{t+1}}{r_t}
    \geq
    1 + \frac{\beta_q(\tau)}{Rd}+o(1).
\end{align}
Hence, for all sufficiently large $t$, we can set $\rho := 1 + \frac{\beta_q(\tau)}{2Rd} > 1$.
This completes the proof. \qed
\end{proof}

\begin{proposition}\label{prop:m_convergence}
Consider the IGO updates~\eqref{eq:igo_ml_update_gaussian} applied to the strongly convex quadratic objective function~\eqref{eq:convex_quadratic_func} with the quantile level $q \in (0, 1/2)$.

Suppose that $\tau \in (0,\tau_{\mathrm{crit}}(q))$ and that
Assumption~\ref{asup:cond_num_lemma_assumption} holds.
Then
\begin{align}
    \lim_{t\to\infty}m_t = 0.
\end{align}
\end{proposition}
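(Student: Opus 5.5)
We argue by contradiction, following the outline given at the start of this subsection. By \Cref{lem:level_set_m}, $m_t^\top A m_t \to V' \geq 0$. Since $A \succ 0$, $V'=0$ would give $\|m_t\|^2 \leq m_t^\top A m_t/\lambda_{\min}(A) \to 0$. It therefore suffices to exclude $V'>0$, so assume $V'>0$ for the rest of the argument.

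Under this assumption \Cref{lem:existence_rho} applies, since $\tau<\tau_{\mathrm{crit}}(q)$ and $q\in(0,1/2)$. It yields $\rho>1$ and $t_0$ such that for all $t\geq t_0$ with $\Cond(AC_t)\leq R$ we have $r_{t+1}\geq \rho\, r_t$, where $r_t=\Tr(AC_t)$. Here $R$ is the constant from Assumption~\ref{asup:cond_num_lemma_assumption}, and $\rho$ is the one appearing there.

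For the remaining iterations we use the definition~\eqref{eq:beta} of $\gamma$. Since $\Tr(AC_{t+1})=\Tr(AC_t)+\Tr(A\Delta_C(\theta_t))$, every $t$ satisfies $r_{t+1}\geq \gamma\, r_t$, with $\gamma\geq 1-\tau>0$ (we may assume $\tau<1$; otherwise replace $\gamma$ by a positive lower bound). Positivity of $r_t$ holds because $C_t\succ0$ is preserved, as $C_{t+1}\succeq(1-\tau)C_t+\tau C_t^\star$ and $C_t^\star\succ0$ for a truncated nondegenerate Gaussian.

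Set $N_T:=\sum_{t=t_0}^{T-1}\ind{\Cond(AC_t)\leq R}$. Iterating the two one-step bounds gives
\begin{align}
\log r_T \geq \log r_{t_0} + N_T\log\rho + (T-t_0-N_T)\log\gamma .
\end{align}
If $\gamma\geq1$ the claim is immediate, so assume $\log\gamma<0$. Let $p$ be the left-hand side of \eqref{eq:asm}. Because $t_0$ is fixed, $\liminf_T N_T/T = p$. Assumption~\ref{asup:cond_num_lemma_assumption} says $p>-\log\gamma/(\log\rho-\log\gamma)$, which rearranges to
\begin{align}
p\log\rho+(1-p)\log\gamma>0 .
\end{align}
Choose $\varepsilon>0$ small. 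For all large $T$ we have $N_T/T\geq p-\varepsilon$. The map $s\mapsto s\log\rho+(1-s)\log\gamma$ is increasing, so
\begin{align}
\frac{1}{T}\log r_T \geq (p-\varepsilon)\log\rho+(1-p+\varepsilon)\log\gamma + o(1),
\end{align}
and the right-hand side is bounded below by a positive constant. Hence $r_T\to\infty$.

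This contradicts \Cref{prop:variance_convergence}, which gives $AC_t\to O$ and hence $r_t\to0$. Therefore $V'=0$ and $m_t\to0$.

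The step needing the most care is the uniformity in \Cref{lem:existence_rho}. We need a single $t_0$ beyond which the bound $r_{t+1}\geq\rho r_t$ holds at every good index, with $\rho$ independent of $t$. This follows from its proof: the $o(1)$ term is controlled uniformly once $\Cond(AC_t)\leq R$ with fixed $R$, since \Cref{lem:asymptotic_updates} is stated for $t$ sufficiently large under that bound. With that in place, the rest is the averaging computation above.
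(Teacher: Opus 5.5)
Your proposal is correct and follows essentially the same argument as the paper: assume $V'>0$, apply \Cref{lem:existence_rho} at the indices with $\Cond(AC_t)\leq R$ and the bound via $\gamma$ at the others, sum the logarithms, and use Assumption~\ref{asup:cond_num_lemma_assumption} to force $\Tr(AC_t)\to\infty$, contradicting \Cref{prop:variance_convergence}. Your extra bookkeeping (positivity of $r_t$, the harmless shift by $t_0$, and the case $\gamma\geq 1$) fills in details the paper leaves implicit. Your hedge about $\tau=1$ is unnecessary, since $\tau_{\mathrm{crit}}(q)<1$ follows from the Mills-ratio bound $\Phi(-x)<\phi(x)/x$ with $x=-\alpha_q$, which guarantees $\gamma\geq 1-\tau>0$.
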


\begin{proof}
We prove the proposition by contradiction.
Assume, for contradiction, that $V' > 0$. 

Let $\bar{T}$ be sufficiently large. 
Then, for $t \geq \bar{T}$,
if $\Cond(A C_t) \leq R$, we have $\Tr(A C_{t+1}) \geq \rho \Tr(A C_{t})$ in light of \Cref{lem:existence_rho}. 
If not, we know that $\Tr(A C_{t+1}) \geq \gamma \Tr(A C_{t})$ from \eqref{eq:beta}. 
Therefore, for $t \geq \bar{T}$, we have
\begin{align}\label{eq:one_step}
    \Tr(A C_{t+1}) \geq \rho^{\ind{\Cond(AC_t) \leq R}} \gamma^{1 - \ind{\Cond(AC_t) \leq R}} \Tr(A C_t).
\end{align}
Taking logarithms in~\eqref{eq:one_step} and summing the resulting inequalities over $i = 0,\dots, T - 1$, we obtain
\begin{subequations}
\begin{align}
    \MoveEqLeft[1]
    \log \Tr(A C_{t + T}) - \log \Tr(A C_{t}) \notag \\
    &\textstyle= \sum_{i = 0}^{T - 1} \big( \log \Tr(A C_{t + i+1}) - \log \Tr(A C_{t + i}) \big)\\
    &\textstyle\geq \sum_{i = 0}^{T - 1} 
    \big(\ind{\Cond(A C_{t+i}) \leq R} \log(\rho) \notag \\
    &\textstyle\hspace{7em} + (1 - \ind{\Cond(A C_{t+i}) \leq R}) \log(\gamma) \big)\\
    &\textstyle= \sum_{i = 0}^{T - 1} \ind{\Cond(A C_{t+i}) \leq R} \big( \log(\rho) - \log(\gamma) \big) + T \log(\gamma).
\end{align}
\end{subequations}
Then Assumption~\ref{asup:cond_num_lemma_assumption} leads to
\begin{subequations}
\begin{align}
    \MoveEqLeft[0]
    \textstyle\liminf_{T \to \infty}\frac{1}{T} \big(
    \log \Tr(A C_{t + T}) - \log \Tr(A C_{t}) \big) \notag \\
    &\textstyle\geq \liminf_{T \to \infty}\frac{1}{T}\sum_{i = 0}^{T - 1} \ind{\Cond(A C_{t+i}) \leq R} \big( \log(\rho) - \log(\gamma) \big) + \log(\gamma) \\
    & > 0,
\end{align}
\end{subequations}
which implies $\lim_{T \to \infty} \Tr(A C_{t + T}) = +\infty$, contradicting $\lim_{t \to \infty} \Tr(A C_{t}) = 0$. 
Therefore, $V' = 0$. 
Hence, by \Cref{lem:level_set_m}, $\lim_{t\to \infty} m_t = 0$. 
This completes the proof.\qed
\end{proof}

Finally, \Cref{cor:result_summary} combines the convergence results established in
\Cref{subsec:C_convergence} and \Cref{subsec:m_convergence}.

\begin{corollary}\label{cor:result_summary}
Consider the IGO updates~\eqref{eq:igo_ml_update_gaussian} applied to the strongly convex quadratic objective function~\eqref{eq:convex_quadratic_func}.
\begin{enumerate}
    \item For any $q \in (0,1)$ and $\tau \in (0,1]$, we have $\lim_{t\to\infty}C_t = O$.
    \item Suppose, in addition, that Assumption~\ref{asup:cond_num_lemma_assumption} holds. 
    For any $q \in (0,1/2)$ and $\tau \in (0,\tau_{\mathrm{crit}}(q))$, with $\tau_{\mathrm{crit}}(q)$ defined in \Cref{lem:existence_rho}, we have $\lim_{t\to\infty}m_t = 0$.
    Consequently, $\lim_{t \to \infty}(m_t, C_t) = (0,O)$.
\end{enumerate}
\end{corollary}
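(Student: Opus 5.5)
The corollary follows by assembling results already established, so the plan is to derive each item from the corresponding proposition and then combine them.

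\textbf{Item 1.} We invoke \Cref{prop:variance_convergence}. It holds for every $q\in(0,1)$ and $\tau\in(0,1]$, because \Cref{prop:final_transformation} and the moment bound with $L=15$ use no restriction on $q$ or $\tau$ beyond these ranges. The proposition gives $AC_t\to O$. To pass from $AC_t$ to $C_t$, we use the invertibility of $A$. Since $A\succ0$, the map $X\mapsto A^{-1}X$ is a continuous linear map, so
\begin{align}
C_t = A^{-1}(AC_t)\to A^{-1}O = O.
\end{align}
An alternative route is through traces: $\Tr(AC_t)\ge\lambda_{\min}(A)\Tr(C_t)$ and $C_t\succeq0$, so $\Tr(C_t)\to0$ and hence $C_t\to O$.

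\textbf{Item 2.} Under Assumption~\ref{asup:cond_num_lemma_assumption}, with $q\in(0,1/2)$ and $\tau\in(0,\tau_{\mathrm{crit}}(q))$, the hypotheses of \Cref{prop:m_convergence} are met exactly, so $m_t\to0$.

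We should check that the last step in the proof of \Cref{prop:m_convergence} is valid. That proof establishes $V'=0$, where $V'=\lim_t m_t^\top A m_t$ by \Cref{lem:level_set_m}. Strong convexity then gives
\begin{align}
\|m_t\|^2\le \lambda_{\min}(A)^{-1}\,m_t^\top A m_t\to0,
\end{align}
so $m_t\to0$ indeed follows.

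\textbf{Joint limit.} Item 1 applies in particular in the setting of Item 2, since $(0,1/2)\subset(0,1)$ and $\tau_{\mathrm{crit}}(q)$ is presumably at most $1$, or in any case $\tau\le1$ is assumed throughout. Combining $m_t\to0$ and $C_t\to O$ yields $(m_t,C_t)\to(0,O)$ in the product topology.

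No step here is a genuine obstacle; the only point needing care is the conversion from $AC_t\to O$ to $C_t\to O$, which uses the positive definiteness of $A$.
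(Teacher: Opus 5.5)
Your proposal is correct and takes the same approach as the paper, which obtains the corollary by combining \Cref{prop:variance_convergence} and \Cref{prop:m_convergence}; your explicit step $C_t = A^{-1}(AC_t)\to O$ (or the trace bound) supplies a detail the paper leaves implicit. Your hedge about $\tau_{\mathrm{crit}}(q)$ is not needed: for $q\in(0,1/2)$, the Mills-ratio bound gives $\tau_{\mathrm{crit}}(q) = -\alpha_q\Phi(\alpha_q)/\phi(\alpha_q) < 1$.
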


\section{Conclusion}
\label{sec:conclusion}

In this paper, we studied discrete-time Information-Geometric Optimization in continuous spaces.
Focusing on IGO over the multivariate Gaussian family on strongly convex quadratic objective functions, we analyzed a practically relevant setting that combines full covariance adaptation, quantile-based weights, and natural-gradient updates in the expectation-parameter coordinates of an exponential family with a fixed positive learning rate.

In this setting, our analysis established the convergence of the covariance matrix to the zero matrix.
In addition, under Assumption~\ref{asup:cond_num_lemma_assumption}, we proved that the mean vector converges to the global optimum.
These results constitute a mathematically meaningful first step toward a complete convergence theory of IGO in continuous domains.

Our results also clarify the main remaining obstacle: controlling the evolving shape of the covariance matrix, rather than only its overall collapse.
In the current analysis, this issue is addressed through Assumption~\ref{asup:cond_num_lemma_assumption}, thereby highlighting the gap to a fully unconditional convergence guarantee.

An important direction for future work is therefore to remove this assumption and establish a fully unconditional convergence guarantee for the mean vector.
We hope that the present analysis provides a useful foundation for further progress in this direction.

\begin{credits}
\subsubsection{\ackname} 
This study was partly funded by JSPS KAKENHI 26K02993.

\subsubsection{\discintname}
The authors have no competing interests to declare that are relevant to the
content of this article.
\end{credits}



\begin{thebibliography}{10}
\providecommand{\url}[1]{\texttt{#1}}
\providecommand{\urlprefix}{URL }
\providecommand{\doi}[1]{https://doi.org/#1}

\bibitem{akimoto2012ng}
Akimoto, Y.: 
Analysis of a natural gradient algorithm on monotonic convex-quadratic-composite functions. 
In: Proceedings of the 14th Annual Conference on Genetic and Evolutionary Computation. 
pp. 1293--1300 (2012). 

\bibitem{akimoto2012igoflow}
Akimoto, Y., Auger, A., Hansen, N.: 
Convergence of the continuous time trajectories of isotropic evolution strategies on monotonic {$\mathcal{C}^2$}-composite functions. 
In: Parallel Problem Solving from Nature - PPSN XII. 
pp. 42--51 (2012). 

\bibitem{akimoto2022ode}
Akimoto, Y., Auger, A., Hansen, N.: An ode method to prove the geometric convergence of adaptive stochastic algorithms. Stochastic Processes and their Applications  \textbf{145},  269--307 (2022). 

\bibitem{akimoto2013objective}
Akimoto, Y., Ollivier, Y.: Objective improvement in information-geometric optimization. In: Foundations of Genetic Algorithms -- FOGA XII. pp. 1--10 (2013). 

\bibitem{amari1998natural}
Amari, S.i.: Natural gradient works efficiently in learning. 
Neural Computation  \textbf{10}(2), 251--276 (1998). 

\bibitem{beyer2014convergence}
Beyer, H.G.: 
Convergence analysis of evolutionary algorithms that are based on the paradigm of information geometry. 
Evolutionary Computation \textbf{22}(4), 679--709 (2014). 

\bibitem{fujii2018topology}
Fujii, G., Takahashi, M., Akimoto, Y.: Cma-es-based structural topology optimization using a level set boundary expression―application to optical and carpet cloaks. 
Computer Methods in Applied Mechanics and Engineering  \textbf{332},  624--643 (2018). 

\bibitem{tobias2012igoflow}
Glasmachers, T.: Convergence of the igo-flow of isotropic gaussian distributions on convex quadratic problems. 
In: Parallel Problem Solving from Nature - PPSN XII. 
pp. 1--10 (2012).

\bibitem{guilmeau2025klcondition}
Guilmeau, T., Chouzenoux, E., Elvira, V.: A divergence-based condition to ensure quantile improvement in black-box global optimization. 
IEEE Transactions on Evolutionary Computation  \textbf{29}(4),  1017--1028 (2025).

\bibitem{hansen2001CMAES}
Hansen, N., Ostermeier, A.: Completely derandomized self-adaptation in evolution strategies. 
Evolutionary Computation  \textbf{9}(2),  159--195 (2001). 

\bibitem{Larson2019dfomethod}
Larson, J., Menickelly, M., Wild, S.M.: Derivative-free optimization methods. 
Acta Numerica  \textbf{28},  287--404 (2019). 

\bibitem{maki2019control}
Maki, A., Sakamoto, N., Akimoto, Y., Nishikawa, H., Umeda, N.: 
Application of optimal control theory based on the evolution strategy (cma-es) to automatic berthing. 
Journal of Marine Science and Technology  \textbf{25}(1),  221--233 (2019). 

\bibitem{mathai1992quadratic}
Mathai, A.M., Provost, S.B.: 
Quadratic Forms in Random Variables: Theory and Applications, Statistics: Textbooks and Monographs, 
vol.~126. Marcel Dekker, (1992).

\bibitem{nomura2021warmstartcma}
Nomura, M., Watanabe, S., Akimoto, Y., Ozaki, Y., Onishi, M.: Warm starting cma-es for hyperparameter optimization. Proceedings of the AAAI Conference on Artificial Intelligence  \textbf{35}(10),  9188--9196 (2021).

\bibitem{ollivier2017igo}
Ollivier, Y., Arnold, L., Auger, A., Hansen, N.: Information-geometric optimization algorithms: A unifying picture via invariance principles. 
Journal of Machine Learning Research  \textbf{18}(18),  1--65 (2017).

\bibitem{2018largescalecma}
Varelas, K., Auger, A., Brockhoff, D., Hansen, N., ElHara, O.A., Semet, Y., Kassab, R., Barbaresco, F.: 
A comparative study of large-scale variants of cma-es. 
In: Parallel Problem Solving from Nature -- PPSN XV. 
pp. 3--15 (2018). 

\end{thebibliography}


\clearpage

\setcounter{section}{0}
\renewcommand{\thesection}{\Alph{section}}
\renewcommand{\theHsection}{appendix.\Alph{section}}

\appendix
\section{Proof of the existence of the constant $L$ in \Cref{prop:variance_convergence}}
\label{sec:appendix_A}

\begin{lemma}\label{lem:gaussian_quadratic_L15}
Let $X \sim \mathcal{N}(m,C)$, and let
\begin{align}
    Y = f(X) = X^\top A X,
\end{align}
where $A \in \mathbb{R}^{d\times d}$ is symmetric positive definite.
Let
\begin{align}
    \mu_{(2)} := \E\!\left[(Y-\E[Y])^2\right],
    \qquad
    \mu_{(4)} := \E\!\left[(Y-\E[Y])^4\right].
\end{align}
If $\mu_{(2)} > 0$, then
\begin{align}
    \mu_{(4)} \leq 15\mu_{(2)}^2.
\end{align}
\end{lemma}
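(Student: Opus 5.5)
We reduce $Y$ to a sum of independent noncentral chi-square type terms and compare cumulants. Write $X = m + \sqrt{C} Z$ with $Z\sim\mathcal{N}(0,I)$, and diagonalize $\sqrt{C}A\sqrt{C} = Q\Lambda Q^\top$ with $\Lambda=\diag(\lambda_1,\dots,\lambda_d)$, $\lambda_i\ge 0$. Setting $W := Q^\top Z\sim\mathcal{N}(0,I)$ and $b := Q^\top \sqrt{C} A m$, we get
\begin{align}
    Y - \E[Y] = \sum_{i=1}^d \bigl(\lambda_i (W_i^2-1) + 2 b_i W_i\bigr) =: \sum_{i=1}^d \xi_i,
\end{align}
a sum of independent, centered random variables. (If $C$ is singular, $b$ absorbs only the range of $\sqrt{C}$, and the same representation holds.)

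The cleanest route is through cumulants. Each $\xi_i$ is an affine function of a single standard normal, so its cumulant generating function is explicit. This gives
\begin{align}
    \kappa_2(\xi_i) = 2\lambda_i^2 + 4b_i^2, \qquad \kappa_4(\xi_i) = 48\lambda_i^4 + 96\lambda_i^2 b_i^2 .
\end{align}
These follow from the standard cumulant formula for Gaussian quadratic forms~\cite{mathai1992quadratic}, $\kappa_r = 2^{r-1}(r-1)!\bigl(\Tr((AC)^r) + r\, m^\top A(CA)^{r-1}m\bigr)$, written in the eigenbasis. I would quote this formula rather than rederive it.

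Cumulants add over independent summands, so $\mu_{(2)} = \sum_i \kappa_2(\xi_i)$, and
\begin{align}
    \mu_{(4)} = \kappa_4(Y) + 3\mu_{(2)}^2, \qquad \kappa_4(Y)=\sum_i \kappa_4(\xi_i).
\end{align}

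The key step is to show $\kappa_4(Y)\le 12\mu_{(2)}^2$. We have
\begin{align}
    \kappa_4(\xi_i) = 48\lambda_i^4 + 96\lambda_i^2 b_i^2 \le 12(2\lambda_i^2+4b_i^2)^2 = 48\lambda_i^4+192\lambda_i^2b_i^2+192b_i^4,
\end{align}
so $\kappa_4(\xi_i)\le 12\,\kappa_2(\xi_i)^2$ for each $i$. Since the $\kappa_2(\xi_i)$ are nonnegative, $\sum_i \kappa_2(\xi_i)^2 \le \bigl(\sum_i \kappa_2(\xi_i)\bigr)^2$. It follows that $\kappa_4(Y)\le 12\mu_{(2)}^2$, hence $\mu_{(4)}\le 15\mu_{(2)}^2$. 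The hypothesis $\mu_{(2)}>0$ is needed only so that the ratio in \eqref{eq:moment_bound} is meaningful.

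The main obstacle is getting the fourth-cumulant constants right, in particular the coefficient $96$ on the cross term $\lambda_i^2b_i^2$. I would verify it directly. The cumulant generating function of $\lambda(W^2-1)+2bW$ is
\begin{align}
    -\lambda s - \tfrac12\log(1-2\lambda s) + \frac{2b^2s^2}{1-2\lambda s}.
\end{align}
Its $s^4/4!$ coefficient is $48\lambda^4 + 24\cdot 2b^2\cdot(2\lambda)^2 = 48\lambda^4+96\lambda^2b^2$, which agrees with the formula above. The bound is tight only when all $b_i=0$ and a single $\lambda_i$ is nonzero, i.e. a centered $\chi^2_1$, where $\mu_{(4)}/\mu_{(2)}^2=15$ exactly. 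This confirms that $L=15$ is the sharp constant.
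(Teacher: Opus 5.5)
Your argument is correct in substance and follows essentially the paper's route. Both proofs quote the Mathai--Provost cumulant formula and reduce the claim to $\kappa_4(Y)\le 12\mu_{(2)}^2$.

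The only difference is how that bound is organized. You prove it coordinate-wise in the eigenbasis of $\sqrt{C}A\sqrt{C}$ and then use $\sum_i a_i^2\le(\sum_i a_i)^2$. The paper stays at the matrix level with $\Sigma=\sqrt{A}C\sqrt{A}$ and $M=\sqrt{A}m$. It uses $\Tr(\Sigma^4)\le(\Tr(\Sigma^2))^2$ and $M^\top\Sigma^3M\le\Tr(\Sigma^2)\,M^\top\Sigma M$, followed by the scalar bound $(1+4x)/(1+2x)^2\le 1$. Your version makes the equality case (a single centered $\chi^2_1$) more transparent.

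However, the constant you single out as the main obstacle is miscomputed. Note that $24\cdot 2b^2\cdot(2\lambda)^2=192\lambda^2b^2$, not $96\lambda^2b^2$. The general formula you quote also gives $2^3\cdot 3!\cdot 4\,\lambda_i^2b_i^2=192\lambda_i^2b_i^2$ in the eigenbasis, matching the paper's $48\bigl(\Tr(\Sigma^4)+4M^\top\Sigma^3M\bigr)$. So the correct value is $\kappa_4(\xi_i)=48\lambda_i^4+192\lambda_i^2b_i^2$, and your claim that the two computations agree is false as written.

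The slip is harmless. With the correct value, $12\kappa_2(\xi_i)^2-\kappa_4(\xi_i)=192\,b_i^4\ge 0$, so the per-coordinate bound, the conclusion $\mu_{(4)}\le 15\mu_{(2)}^2$, and your sharpness remark all go through unchanged. You should still correct the display and the agreement claim.
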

\begin{proof}
By Theorem 3.3.2 in \cite{mathai1992quadratic}, the fourth cumulant of $Y$ is
\begin{align}
    c_4(Y) &= 48\Big(\Tr\big((AC)^4\big) + 4m^\top (AC)^3 A m\Big).
\end{align}
Since $\mu_{(4)} = 3\mu_{(2)}^2 + c_4(Y)$, it suffices to control $c_4(Y) / \mu_{(2)}^2$.

Let
\begin{align}
    M := \sqrt{A}m,
    \qquad
    \Sigma := \sqrt{A} C \sqrt{A}.
\end{align}
Because $AC$ and $\Sigma$ are similar, we may rewrite
\begin{align}
    \mu_{(2)} &= 2\Tr(\Sigma^2)+4M^\top \Sigma M, \\
    c_4(Y) &= 48\Big(\Tr(\Sigma^4)+4M^\top \Sigma^3 M\Big).
\end{align}

Set
\begin{align}
    S := \Tr(\Sigma^2),
    \qquad
    T := M^\top \Sigma M.
\end{align}
Since $\Sigma$ is symmetric positive semidefinite,
\begin{align}
    \Tr(\Sigma^4) \leq (\Tr(\Sigma^2))^2 = S^2,
\end{align}
and
\begin{subequations}
\begin{align}
    M^\top \Sigma^3 M 
    &= (\sqrt{\Sigma}M)^\top \Sigma^2 (\sqrt{\Sigma}M) \\
    &\leq \lambda_{\max}(\Sigma^2)\, M^\top \Sigma M \\
    &\leq \Tr(\Sigma^2)\, M^\top \Sigma M
    = ST.
\end{align}
\end{subequations}
Therefore,
\begin{align}
    c_4(Y) \leq 48(S^2 + 4ST).
\end{align}
Moreover,
\begin{align}
    \mu_{(2)} = 2S + 4T.
\end{align}
If $S = 0$, then $\Sigma = O$, hence $T=0$, and therefore $\mu_{(2)}=0$, contradicting the assumption.
Thus $S > 0$, and with $x := T/S \geq 0$,
\begin{align}
    \frac{c_4(Y)}{\mu_{(2)}^2}
    \leq 48\,\frac{S^2+4ST}{(2S+4T)^2}
    = 12\,\frac{1+4x}{(1+2x)^2}.
\end{align}
Now define
\begin{align}
    \phi(x) := \frac{1+4x}{(1+2x)^2},
    \qquad x \geq 0.
\end{align}
Then
\begin{align}
    \phi'(x) = -\frac{8x}{(1+2x)^3} \leq 0,
\end{align}
so $\phi$ is decreasing on $[0,\infty)$. 
Hence
\begin{align}
    \phi(x) \leq \phi(0) = 1,
\end{align}
which gives
\begin{align}
    \frac{c_4(Y)}{\mu_{(2)}^2} \leq 12.
\end{align}
Consequently,
\begin{align}
    \frac{\mu_{(4)}}{\mu_{(2)}^2} = 3 + \frac{c_4(Y)}{\mu_{(2)}^2} \leq 3 + 12 = 15.
\end{align}
This completes the proof. \qed
\end{proof}

\section{Complete proof of \Cref{lem:asymptotic_updates}}
\label{sec:appendix_B}

\begin{proof}
We divide the proof into four steps.

\noindent
\textbf{1. Linearization of the truncation region.}

First, we show that, under the assumption, the truncation region $\{f(x) \leq \kappa_t^q\}$ is asymptotically approximated, in the standardized coordinates, by a half-space determined by the linear term. 
Let $X_t \sim \mathcal{N}(m_t, C_t)$, and decompose it as
\begin{align}
    X_t = m_t + \sqrt{C_t} Z_t,\qquad Z_t\sim\mathcal N(0,I).
\end{align}
Substituting this decomposition into $f$, we obtain the representation
\begin{align}
    f(X_t) = m_t^\top A m_t + 2v_t^\top Z_t + Z_t^\top B_t Z_t,
\end{align}
where $B_t := \sqrt{C_t}A\sqrt{C_t}$.
Hence, the truncation condition $f(x) \leq \kappa_t^q$ can be rewritten as
\begin{align}
    f(X_t) \leq \kappa_t^q
    \iff \frac{1}{\|v_t\|}v_t^\top Z_t +\frac{Z_t^\top B_t Z_t}{2\|v_t\|} \leq \frac{\kappa_t^q-m_t^\top A m_t}{2\|v_t\|}.
\end{align}

Let
\begin{align}
    M_t := \sqrt{A}m_t,
    \qquad
    \Sigma_t := \sqrt{A}C_t\sqrt{A}.
\end{align}
Then, by \Cref{lem:level_set_m} and the assumption of this lemma,
\begin{align}
    \|M_t\|^2 = m_t^\top A m_t \to V' >0.
\end{align}
Hence, for sufficiently large $t$, there exists a constant $c_0 > 0$ such that
\begin{align}
    \|M_t\| \geq c_0.
\end{align}

Moreover, $B_t = \sqrt{C_t}A\sqrt{C_t}$ is similar to $AC_t$, and $\Sigma_t = \sqrt{A}C_t\sqrt{A}$ has the same eigenvalues as $AC_t$.
Therefore, we have
\begin{align}
    \|B_t\| = \lambda_{\max}(\Sigma_t),
    \qquad
    \Cond(\Sigma_t) = \Cond(AC_t) \leq R.
\end{align}

Now,
\begin{align}
    \|v_t\|^2
    = m_t^\top A C_t A m_t
    = M_t^\top \Sigma_t M_t
    \geq \lambda_{\min}(\Sigma_t)\|M_t\|^2.
\end{align}
Since $\Cond(\Sigma_t) \leq R$, we have
\begin{align}
    \lambda_{\min}(\Sigma_t)
    \geq \frac{\lambda_{\max}(\Sigma_t)}{R}.
\end{align}
Hence
\begin{align}
    \|v_t\|^2
    \geq \frac{c_0^2}{R}\lambda_{\max}(\Sigma_t)
    = \frac{c_0^2}{R}\|B_t\|.
\end{align}
Therefore,
\begin{align}
    \frac{\|B_t\|}{\|v_t\|}
    \leq \frac{\sqrt{R}}{c_0}\sqrt{\|B_t\|}.
\end{align}
By \Cref{prop:variance_convergence}, since $\lim_{t\to\infty}AC_t=O$, we have 
\begin{align}
    \|B_t\| = \lambda_{\max}(\Sigma_t) \to 0.
\end{align}
It follows that
\begin{align}
    \frac{\|B_t\|}{\|v_t\|} \to 0.
\end{align}

Hence the quadratic term
\begin{align}
    \frac{Z_t^\top B_t Z_t}{2\|v_t\|}
\end{align}
is negligible compared with the linear term on the $Z_t = O_p(1)$ scale. 
Indeed, we have
\begin{align}
    \left|\frac{Z_t^\top B_t Z_t}{2\|v_t\|}\right|
    \leq \frac{\|B_t\|}{2\|v_t\|}\|Z_t\|^2.
\end{align}
Since $\|B_t\|/\|v_t\| \to 0$ and $\|Z_t\|^2 = O_p(1)$, it follows that
\begin{align}
    \frac{Z_t^\top B_t Z_t}{2\|v_t\|} \to 0
    \qquad\text{in probability}.
\end{align}

Define
\begin{align}
    e_t := \frac{v_t}{\|v_t\|},
    \qquad
    \alpha_t:=\frac{\kappa_t^q-m_t^\top A m_t}{2\|v_t\|}.
\end{align}
Then
\begin{align}
    f(X_t)\le \kappa_t^q
    \iff
    e_t^\top Z_t + \frac{Z_t^\top B_t Z_t}{2\|v_t\|} \leq \alpha_t.
\end{align}
Hence, in standardized coordinates, the truncation region is asymptotically approximated by the half-space
\begin{align}
    e_t^\top z \leq \alpha_t.
\end{align}

\noindent
\textbf{2. Determination of the asymptotic half-space.}

Second, we show that the truncation region is asymptotically equivalent to the half-space determined by the $q$-quantile of the standard normal distribution.
Let
\begin{align}
    Y_t := e_t^\top Z_t \sim \mathcal N(0,1),
    \qquad
    R_t := \frac{Z_t^\top B_t Z_t}{2\|v_t\|}.
\end{align}
Then the truncation event can be written as
\begin{align}
    S_t := \{Y_t+R_t\le \alpha_t\}.
\end{align}
Since $\kappa_t^q$ is the $q$-quantile of $f(X_t)$, we have
\begin{align}
    q = \Pr(S_t) = \Pr(Y_t+R_t\le \alpha_t).
\end{align}

For any fixed $\varepsilon > 0$, we have
\begin{align}
    \MoveEqLeft
    \{Y_t \leq \alpha_t - \varepsilon\} \cap \{|R_t| \leq \varepsilon\} \notag \\
    &\subset
    \{Y_t + R_t \leq \alpha_t\}
    \subset
    \{Y_t \leq \alpha_t + \varepsilon\} \cup \{|R_t| > \varepsilon\}.
\end{align}
Therefore,
\begin{align}
    \Pr(Y_t \leq \alpha_t - \varepsilon) - \Pr(|R_t| > \varepsilon)
    \leq q \leq
    \Pr(Y_t \leq \alpha_t +\varepsilon) + \Pr(|R_t| > \varepsilon).
\end{align}
Since $Y_t \sim \mathcal{N}(0,1)$, it follows that
\begin{align}
    \Phi(\alpha_t - \varepsilon) - \Pr(|R_t| > \varepsilon)
    \leq q \leq
    \Phi(\alpha_t + \varepsilon) + \Pr(|R_t| > \varepsilon).
\end{align}
Moreover, by Step~1, $R_t \to 0$ in probability, so for every fixed $\varepsilon>0$,
\begin{align}
    \Pr(|R_t|>\varepsilon) \to 0.
\end{align}
Hence,
\begin{align}
    \Phi(\alpha_t - \varepsilon) - o(1)
    \leq q \leq
    \Phi(\alpha_t + \varepsilon) + o(1).
\end{align}
Therefore,
\begin{align}
    \alpha_t \to \Phi^{-1}(q)=:\alpha_q.
\end{align}

Finally, note that
\begin{align}
    \big\{\mathbb{I}\{S_t\} \neq \mathbb{I}\{Y_t \leq \alpha_q\}\big\}
    \subset
    \big\{|Y_t - \alpha_q| \leq |R_t| + |\alpha_t - \alpha_q|\big\}.
\end{align}
Hence, for any fixed $\varepsilon>0$,
\begin{align}
    \MoveEqLeft[2]
    \Pr\big(\mathbb{I}\{S_t\} \neq \mathbb{I}\{Y_t \leq \alpha_q\}\big) \notag \\
    &\leq \Pr\big(|Y_t - \alpha_q| \leq \varepsilon\big) + \Pr\big(|R_t| + |\alpha_t - \alpha_q| > \varepsilon\big).
\end{align}
Since $Y_t \sim \mathcal N(0,1)$ has a continuous density, while $R_t \to 0$ in probability and $\alpha_t \to \alpha_q$, the right-hand side converges to $0$. Therefore,
\begin{align}
    \Pr\big(\mathbb{I}\{S_t\} \neq \mathbb{I}\{Y_t \leq \alpha_q\}\big) \to 0.
\end{align}
Thus the truncation region is asymptotically equivalent to the half-space
\begin{align}
    e_t^\top z\le \alpha_q.
\end{align}

\noindent
\textbf{3. Truncated first and second moments.}

Third, we calculate the first and second truncated moments by replacing the truncation event with its asymptotic half-space.
Define
\begin{align}
    \psi_t := \E\left[\frac{1}{q}\mathbb{I}\{S_t\} Z_t\right],
    \qquad
    \Psi_t := \E\left[\frac{1}{q}\mathbb{I}\{S_t\} Z_tZ_t^\top\right],
\end{align}
and let
\begin{align}
    H_t := \{Y_t \leq \alpha_q\}.
\end{align}

By Step~2, we have $\Pr\big(\mathbb{I}\{S_t\} \neq \mathbb{I}\{H_t\}\big) \to 0$.
We first consider the truncated first moment. 
Since
\begin{align}
    \E\left[\frac{1}{q}\mathbb{I}\{S_t\}Z_t\right]
    - \E\left[\frac{1}{q}\mathbb{I}\{H_t\}Z_t\right]
    = \frac{1}{q}\E\left[\big(\mathbb{I}\{S_t\}-\mathbb{I}\{H_t\}\big)Z_t\right],
\end{align}
the Cauchy--Schwarz inequality yields
\begin{subequations}
\begin{align}
    \MoveEqLeft[2]
    \left\|
    \E\left[\frac{1}{q}\mathbb{I}\{S_t\}Z_t\right]
    - \E\left[\frac{1}{q}\mathbb{I}\{H_t\}Z_t\right]
    \right\| \notag \\
    &\qquad \leq \frac{1}{q}\E\left[\mathbb{I}\big\{\mathbb{I}\{S_t\}\neq \mathbb{I}\{H_t\}\big\}\|Z_t\|\right]\\
    &\qquad \leq \frac{1}{q}
    \big(\E[\|Z_t\|^2]\big)^{1/2}
    \Pr\big(\mathbb{I}\{S_t\} \neq \mathbb{I}\{H_t\}\big)^{1/2}.
\end{align}
\end{subequations}
Since $Z_t\sim\mathcal N(0,I)$, we have $\E[\|Z_t\|^2] < \infty$, and therefore
\begin{align}
    \E\left[\frac{1}{q}\mathbb{I}\{S_t\}Z_t\right]
    = \E\left[\frac{1}{q}\mathbb{I}\{H_t\}Z_t\right] + o(1).
\end{align}
Similarly, for the truncated second moment,
\begin{align}
    \E\left[\frac{1}{q}\mathbb{I}\{S_t\}Z_tZ_t^\top\right]
    - \E\left[\frac{1}{q}\mathbb{I}\{H_t\}Z_tZ_t^\top\right]
    = \frac{1}{q}\E\left[\big(\mathbb{I}\{S_t\}-\mathbb{I}\{H_t\}\big)Z_tZ_t^\top\right].
\end{align}
Hence,
\begin{subequations}
\begin{align}
    \MoveEqLeft[2]
    \left\|
    \E\left[\frac{1}{q}\mathbb{I}\{S_t\}Z_tZ_t^\top\right]
    - \E\left[\frac{1}{q}\mathbb{I}\{H_t\}Z_tZ_t^\top\right]
    \right\|_F \notag \\
    &\qquad \leq \frac{1}{q}\E\left[\mathbb{I}\big\{\mathbb{I}\{S_t\}\neq \mathbb{I}\{H_t\}\big\}\|Z_tZ_t^\top\|_F\right]\\
    &\qquad \leq \frac{1}{q}
    \big(\E[\|Z_tZ_t^\top\|_F^2]\big)^{1/2}
    \Pr\big(\mathbb{I}\{S_t\}\neq \mathbb{I}\{H_t\}\big)^{1/2}.
\end{align}
\end{subequations}
Since $\|Z_tZ_t^\top\|_F = \|Z_t\|^2$ and $Z_t \sim \mathcal N(0,I)$, we have $\E[\|Z_t\|^4] < \infty$. 
Hence,
\begin{align}
    \E\left[\frac{1}{q}\mathbb{I}\{S_t\}Z_tZ_t^\top\right]
    = \E\left[\frac{1}{q}\mathbb{I}\{H_t\}Z_tZ_t^\top\right] + o(1).
\end{align}

To compute the truncated moments under the asymptotic half-space, define
\begin{align}
    U_t := (I - e_te_t^\top)Z_t,
\end{align}
so that 
\begin{align}
    Z_t = Y_t e_t + U_t, 
    \qquad Y_t = e_t^\top Z_t.
\end{align}
Since $e_t$ is a deterministic unit vector and $Z_t \sim \mathcal{N}(0,I)$, both $Y_t$ and $U_t$ are linear transformations of the Gaussian vector $Z_t$. 
Hence $(Y_t, U_t)$ is jointly Gaussian.

Moreover, we have
\begin{align}
    \E[U_t] = (I-e_te_t^\top)\E[Z_t] = 0,
\end{align}
\begin{align}
    \Cov(U_t) 
    = (I-e_te_t^\top)\Cov(Z_t)(I-e_te_t^\top)^\top 
    = (I-e_te_t^\top)^2
    = I-e_te_t^\top.
\end{align}
Finally, the cross-covariance between $Y_t$ and $U_t$ is
\begin{subequations}
\begin{align}
    \Cov(Y_t,U_t)
    &= \E\big[Y_t U_t^\top\big] \\
    &= \E\big[e_t^\top Z_t\, Z_t^\top (I - e_te_t^\top)\big] \\
    &= e_t^\top \E[Z_tZ_t^\top](I - e_te_t^\top) \\
    &= e_t^\top (I - e_te_t^\top) = 0.
\end{align}
\end{subequations}
Since $(Y_t,U_t)$ is jointly Gaussian and uncorrelated, $Y_t$ and $U_t$ are independent.

Therefore,
\begin{subequations}
    \begin{align}
        \E\left[\frac{1}{q}\mathbb{I}\{H_t\} Z_t\right]
        &= \frac{1}{q}\E\left[\mathbb{I}\{Y_t \leq \alpha_q\}(Y_t e_t + U_t)\right] \\
        &= \frac{1}{q}\E\left[\mathbb{I}\{Y_t \leq \alpha_q\}Y_t\right]e_t
        + \frac{1}{q}\E\left[\mathbb{I}\{Y_t\le \alpha_q\}U_t\right].
    \end{align}
\end{subequations}
By independence of $Y_t$ and $U_t$, and since $\E[U_t]=0$,
\begin{align}
    \E\left[\mathbb{I}\{Y_t\le \alpha_q\}U_t\right]
    = \Pr(Y_t \leq \alpha_q)\E[U_t]
    =0.
\end{align}
Hence, by using the standard truncated normal identity
\begin{align}
    \E[Y_t | Y_t \leq \alpha_q] = - \frac{\phi(\alpha_q)}{\Phi(\alpha_q)},
\end{align}
we obtain 
\begin{align}
    \E\left[\frac{1}{q}\mathbb{I}\{H_t\} Z_t\right]
    = \E[Y_t\mid Y_t \leq \alpha_q]\,e_t
    = -\frac{\phi(\alpha_q)}{\Phi(\alpha_q)}\,e_t
    = -\lambda_q e_t,
\end{align}
where $\lambda_q := \phi(\alpha_q)/\Phi(\alpha_q)=\phi(\alpha_q)/q$.

Next,
\begin{align}
    \E\left[\frac{1}{q}\mathbb{I}\{H_t\} Z_tZ_t^\top\right]
    &= \frac{1}{q}\E\left[\mathbb{I}\{Y_t \leq \alpha_q\}(Y_t e_t + U_t)(Y_t e_t + U_t)^\top\right].
\end{align}
Expanding the product gives
\begin{subequations}
\begin{align}
    \MoveEqLeft[2]
    \E\left[\frac{1}{q}\mathbb{I}\{H_t\} Z_tZ_t^\top\right] \notag \\
    &= \frac{1}{q}\E\left[\mathbb{I}\{Y_t \leq  \alpha_q\}Y_t^2\right]e_te_t^\top 
    + \frac{1}{q}\E\left[\mathbb{I}\{Y_t \leq  \alpha_q\}Y_tU_t\right]e_t^\top \\
    &\hspace{2em} + \frac{1}{q}e_t\E\left[\mathbb{I}\{Y_t \leq  \alpha_q\}Y_tU_t^\top\right] 
    + \frac{1}{q}\E\left[\mathbb{I}\{Y_t \leq  \alpha_q\}U_tU_t^\top\right].
\end{align}
\end{subequations}
Again by independence of $Y_t$ and $U_t$, together with $\E[U_t]=0$, the two cross terms vanish.
Also,
\begin{align}
    \frac{1}{q}\E\left[\mathbb{I}\{Y_t\le \alpha_q\}U_tU_t^\top\right]
    = \E[U_tU_t^\top]
    = I - e_te_t^\top.
\end{align}
Therefore,
\begin{align}
    \E\left[\frac{1}{q}\mathbb{I}\{H_t\} Z_tZ_t^\top\right]
    &=
    \E[Y_t^2\mid Y_t\le \alpha_q]\,e_te_t^\top
    + (I - e_te_t^\top).
\end{align}
Hence, by using the standard truncated normal identity
\begin{align}
    \E[Y_t^2\mid Y_t\le \alpha_q] 
    = 1-\frac{\alpha_q\phi(\alpha_q)}{\Phi(\alpha_q)},
\end{align}
we obtain
\begin{subequations}
\begin{align}
    \E\left[\frac{1}{q}\mathbb{I}\{H_t\} Z_tZ_t^\top\right]
    &= \E[Y_t^2\mid Y_t\le \alpha_q]\,e_te_t^\top + (I - e_te_t^\top)\\
    &= \left(1-\frac{\alpha_q\phi(\alpha_q)}{\Phi(\alpha_q)}\right)e_te_t^\top + (I - e_te_t^\top)\\
    &= (1 - \alpha_q\lambda_q)e_te_t^\top + (I-e_te_t^\top) \\
    &= I - \alpha_q\lambda_q\,e_te_t^\top.
\end{align}
\end{subequations}

Consequently, we obtain
\begin{align}
    \psi_t = -\lambda_q e_t + o(1)
    \qquad\text{and}\qquad
    \Psi_t = I-\alpha_q\lambda_q\,e_t e_t^\top + o(1).\label{eq:psi_and_Psi}
\end{align}

\noindent
\textbf{4. Expansion of $m_{t+1}$ and $C_{t+1}$.}

Finally, we combine the asymptotic expansions of the truncated first and second moments obtained in Step~3 with the IGO update formulas, and derive the claimed expansions.

Using the IGO update formulas, we have
\begin{align}
    m_t^\star &= \E\left[ \frac{1}{q}\mathbb{I}\{S_t\} X_t \right] = m_t + \sqrt{C_t}\psi_t \\
    C_t^\star &= \E\left[ \frac{1}{q}\mathbb{I}\{S_t\} (X_t - m_t^\star)(X_t - m_t^\star)^\top \right] = \sqrt{C_t}(\Psi_t - \psi_t \psi_t^\top)\sqrt{C_t}.
\end{align}
By \eqref{eq:psi_and_Psi},
\begin{align}
    m_t^\star - m_t
    &= -\lambda_q \sqrt{C_t}e_t + o\big(\lambda_{\max}^{1/2}(C_t)\big),
\end{align}
where we used that $\sqrt{C_t}o(1)=o\big(\lambda_{\max}^{1/2}(C_t)\big)$.
Since 
\begin{align}
    e_t = \frac{v_t}{\|v_t\|}
    \qquad\text{and}\qquad
    \sqrt{A}\sqrt{C_t}v_t = \sqrt{A}C_t A m_t = u_t,
\end{align}
we have
\begin{align}
    \sqrt{A}\sqrt{C_t}e_t = \frac{u_t}{\|v_t\|}.
\end{align}
Therefore,
\begin{align}
    \sqrt{A}(m_t^\star - m_t)
    = -\lambda_q \frac{u_t}{\|v_t\|} + o\big(\lambda_{\max}^{1/2}(AC_t)\big).
\end{align}

Next, by \eqref{eq:psi_and_Psi},
\begin{align}
    \Psi_t - \psi_t\psi_t^\top
    &= I - (\alpha_q\lambda_q + \lambda_q^2)e_t e_t^\top + o(1),
\end{align}
and therefore
\begin{align}
    \MoveEqLeft[2]
    \sqrt{A}C_t^\star\sqrt{A} \notag \\
    &= \sqrt{A}C_t\sqrt{A}
    -(\alpha_q\lambda_q + \lambda_q^2)\sqrt{A}\sqrt{C_t}e_t e_t^\top \sqrt{C_t}\sqrt{A}
    + o\big(\lambda_{\max}(AC_t)\big).
\end{align}
Again using $\sqrt{A}\sqrt{C_t}e_t = u_t/\|v_t\|$,
\begin{align}
    \sqrt{A}C_t^\star\sqrt{A}
    &= \sqrt{A}C_t\sqrt{A}
    -(\alpha_q\lambda_q+\lambda_q^2)\,
    \frac{u_tu_t^\top}{\|v_t\|^2}
    + o\big(\lambda_{\max}(AC_t)\big).
\end{align}
Moreover, since
\begin{align}
    \left\|\frac{u_t}{\|v_t\|}\right\|
    = \frac{\|\sqrt{A}C_t A m_t\|}{\|\sqrt{C_t}A m_t\|}
    \leq \|\sqrt{A}\sqrt{C_t}\|
    = \lambda_{\max}^{1/2}(AC_t),
\end{align}
we have
\begin{align}
    \sqrt{A}(m_t^\star-m_t)(m_t^\star-m_t)^\top\sqrt{A}
    = \lambda_q^2\,\frac{u_tu_t^\top}{\|v_t\|^2} + o\big(\lambda_{\max}(AC_t)\big).
\end{align}
Therefore, substituting the above expansions into the IGO update~\eqref{eq:igo_ml_update_gaussian} for the covariance matrix in the $\sqrt{A}$-transformed coordinates yields
\begin{subequations}
\begin{align}
    \MoveEqLeft[0]
    \sqrt{A}C_{t+1}\sqrt{A} \notag \\
    &= \sqrt{A}C_t\sqrt{A}
    -\tau(\alpha_q\lambda_q+\lambda_q^2)\frac{u_tu_t^\top}{\|v_t\|^2}
    +\tau(1-\tau)\lambda_q^2\frac{u_tu_t^\top}{\|v_t\|^2}
    +o\big(\lambda_{\max}(AC_t)\big)\\
    &= \sqrt{A}C_t\sqrt{A}
    -(\tau\alpha_q\lambda_q+\tau^2\lambda_q^2)\,
    \frac{u_tu_t^\top}{\|v_t\|^2}
    +o\big(\lambda_{\max}(AC_t)\big).
\end{align}
\end{subequations}
This completes the proof.\qed
\end{proof}

\end{document}